\newcommand{\C}{\mathbb C}
\newcommand{\R}{\mathbb R}
\newcommand{\Z}{\mathbb Z}
\newcommand{\DC}{{\mathcal D}}
\newcommand{\Ff}{{\mathfrak F}}
\newcommand{\GC}{{\mathcal G}}
\newcommand{\OC}{{\mathcal O}}
\newcommand{\VC}{{\mathcal V}}
\newcommand{\om}{\omega}
\newcommand{\Om}{\Omega}
\newcommand{\vtheta}{\vartheta}
\newcommand{\dr}{{\mathrm d}}
\newcommand{\eps}{{\varepsilon}}
\newcommand{\rr}{{\mathrm r}}
\newcommand{\sr}{{\mathrm s}}
\newcommand{\vbf}{{\mathbf v}}
\newcommand{\wbf}{{\mathbf w}}
\newcommand\xbf{{\mathbf x}}
\newcommand{\conesupp}{\operatorname{cone\,supp\,}}
\newcommand{\PsDO}{\psi\mathrm{DO}}
\newcommand{\im}{\operatorname{Im}}
\newcommand{\rank}{\operatorname{rank}}
\newcommand{\re}{\operatorname{Re}}
\newcommand{\sgn}{\operatorname{sgn}}
\newcommand{\supp}{\operatorname{supp}}
\newtheorem{thm}{Theorem}[section]
\newtheorem{theorem}[thm]{Theorem}
\newtheorem{corollary}[thm]{Corollary}
\newtheorem{lemma}[thm]{Lemma}
\newtheorem{proposition}[thm]{Proposition}
\theoremstyle{definition}
\newtheorem{definition}[thm]{Definition}
\newtheorem{example}[thm]{Example}
\theoremstyle{remark}
\newtheorem{remark}[thm]{Remark}
\numberwithin{equation}{section}
\begin{document}

\title[Fourier integral operators]
{A symbolic calculus for\\ Fourier integral operators}

\author[Y. Safarov]{Yuri Safarov}
\address{
Department of Mathematics,
King's College London,
Strand, London WC2R 2LS,
United Kingdom }
\email{yuri.safarov@kcl.ac.uk}

\keywords{Fourier integral operators, symbolic calculus}

\subjclass[2000]{35S30}

\date{\today}

\begin{abstract}
The paper develops a symbolic calculus for Fourier integral operators  associated with canonical transformations. 
\end{abstract}

\maketitle


\section*{Introduction}

The paper deals with Fourier integral operators on a closed $n$-dimen\-sional $C^\infty$-manifold $M$ associated with homogeneous canonical transformations. We shall abbreviate the words `Fourier integral operator' and `pseudodifferential operator'  to FIO and $\PsDO$ and shall always be assuming that the operators act in the space of half-densities on $M$. Recall that, for the half-densities, the inner product $\int_M u(x)\,\overline{v(x)}\,\dr x$ is well defined, and so are the adjoint operators.

It is well known that FIOs associated with canonical transformations form a $^*$-algebra. However, explicit formulae for the principal symbols of their adjoints and compositions  are not obvious, partly due to the fact that there are many possible definitions of the symbol. Most textbooks (in particular, \cite{D}, \cite{H2}, \cite{Tr})  define the principle symbol of a FIO as a half-density on 
a Lagrangian manifold with values in the Keller--Maslov bundle. This definition is convenient for theoretical purposes, but is not suitable when we need to know the exact value of the symbol at a given point. 

Furthermore, the standard definition makes it impossible for the principal symbol of the composition to be equal to the product of principal symbols, as 
the product of half-densities is not a half-density. One can identify half-densities with functions by fixing a model positive density on the Lagrangian manifold but, since there are three FIOs involved in the composition formula, the result obtained with this approach may depend on the choice of the model densities. Also, it is not immediately clear how to choose local trivializations of the three Keller--Maslov bundles, in which the product formula would hold.

The aim of this paper is to present a relatively simple approach which allows one to develop an explicit symbolic calculus. One of its main ideas is to avoid considering the most general phase functions associated with a canonical transformation $\Phi$  and to use only phase functions from the class $\Ff_\Phi$ defined in Subsection \ref{s:transform-phase}.

To make our point more clear, let us recall that the classical theory of $\PsDO$s deals only with the phase functions of the form $(x-y)\cdot\xi$. Instead, one could
treat $\PsDO$s as FIOs associated with the identity transformation, introduce general phase functions associated with this transformation, define principal symbols as half-densities on its graph with values in the Keller--Maslov bundle, and try to prove results in this general setting. However, such a general approach would unlikely have any advantages, and would only make proofs and results less transparent. Likewise, for a general canonical transformation $\Phi$, one does not need to consider all possible phase functions. The phase functions from the class $\Ff_\Phi$, which can be thought of as analogues of $(x-y)\cdot\xi$, turn out to be sufficient to define FIOs associated with $\Phi$ and to carry out all calculations. 

In the first two sections we briefly review some basic notions and results from symplectic geometry and introduce the class of phase function $\Ff_\Phi$. Section \ref{s:fio} is devoted to the definition of FIOs and their principal symbols. The main results are stated and proved in Section \ref{s:calculus}.

\subsection*{Acknowlegements}
The paper can be regarded as a continuation of the joint project \cite{JSS}, in which we needed explicit formulae for the principal symbols of adjoints and compositions of FIOs. The results of Section \ref{s:calculus} would have made our task much easier but, unfortunately, at that time they were not available.

I am grateful to S. Eswarathasan and A. Strohmaier for helpful discussions, and to CRM for sponsoring my visits to Montreal.

\section{Notation and definitions}\label{s:def}

Let $M$ be a smooth $n$-dimensional manifold. 
We denote points of $M$ by $x$, $y$, $z$, and the covectors from $T^*_xM$, $T^*_yM$, $T^*_zM$ by $\xi$, $\eta$, $\zeta$ respectively. The same letters are used for local coordinates on $M$ and the corresponding dual coordinates in the fibres of the cotangent bundle $T^*M$. If $f=(f_1,\ldots,f_n)$ is a vector-function of $n$-dimensional variable $\theta=(\theta_1,\ldots,\theta_n)$ then
\begin{itemize}
\item
$f_\theta$ denotes the $n\times n$-matrix function with entries $(f_i)_{\theta_j}$, where $j$ enumerates elements of the $i$th row.
\end{itemize}
If $C$ is a real symmetric matrix then
\begin{itemize}
\item
$\kappa_+(C)$ and $\kappa_-(C)$ are the numbers of its strictly positive and strictly negative eigenvalues.
\end{itemize}

\subsection{Lagrangian subspaces of $T_\vtheta T^*M$}\label{s:def-1}

Let us fix a point $\vtheta=(x,\xi)\in T^*M$ and consider the tangent space $T_\vtheta T^*M$ over this point. Denote by $\Pi_\vtheta^V$ the differential of the projection $T^*M\mapsto M$ at the point $\vtheta$. The kernel $V_\vtheta$ of the mapping $\Pi_\vtheta^V:T_\vtheta T^*M\mapsto T^*_xM$ is said to be the {\it vertical subspace} of $T_\vtheta T^*M$. Clearly, $V_\vtheta$ is the Lagrangian subspace spanned by vectors of the form $\vbf\cdot\nabla_\xi$ where $\vbf\in\R^n$.

Recall that Lagrangian subspaces are said to be transversal if their intersection is zero. It is well known that for every finite collection of Lagrangian subspaces there exists a Lagrangian subspace transversal to all of them.

A Lagrangian subspace of $T_\vtheta T^*M$ is transversal to the vertical subspace $V_\vtheta$ if and only if it consists of vectors of the form $\,\xbf\cdot\nabla_x+A\xbf\cdot\nabla_\xi\,$, where $\xbf\in\R^n$ and $A\in\R^{n\times n}$ is a fixed symmetric matrix.

Under a change of coordinates $x\to\tilde x$ the components of a vector from $T_\vtheta T^*M$ transform as follows. If $\,\xbf\cdot\nabla_x+\vbf\cdot\nabla_\xi\in T_\vtheta T^*M\,$
in the local coordinates $x$ with some $\xbf,\vbf\in\R^n$ then
\begin{equation}\label{lagr1}
\xbf\cdot\nabla_x\,+\,\vbf\cdot\nabla_\xi\ =\ J\xbf\cdot\nabla_{\tilde x}\,+\,(J^{-1})^T\,\vbf\cdot\nabla_{\tilde\xi}\,+\,CJ\xbf\cdot\nabla_{\tilde\xi}
\end{equation}
where $J$ denotes the Jacobi matrix $\tilde x_x$ and $C$ is the symmetric matrix with entries $C_{ij}=\sum_k\xi_k\;\frac{\partial^2x_k}{\partial\tilde x_j\,\partial\tilde x_j}$. This implies that 
$$
\left\{\xbf\cdot\nabla_x+A\xbf\cdot\nabla_\xi\mid\xbf\in\R^n\right\}\ =\ \left\{\xbf\cdot\nabla_{\tilde x}+\tilde A\xbf\cdot\nabla_{\tilde\xi}\mid\xbf\in\R^n\right\}
$$
where $\tilde A=(J^{-1})^TAJ^{-1}+C\,$. If $\xi\ne0$ then, for any given symmetric matrix $A\in\R^{n\times n}$, we can choose coordinates $\tilde x$ in a neighbourhood of $\vtheta$ in such a way that $\tilde A=0$. Thus we have 

\begin{lemma}\label{l:horizontal}
If $\vtheta=(x,\xi)$ with $\xi\ne0$ then for every Lagrangian subspace $H_\vtheta\subset T_\vtheta T^*M$ transversal to $V_\vtheta$ there exist local coordinates $\tilde x$ in which $H_\vtheta$ is spanned by the vectors $\partial_{\tilde x_i}$. 
\end{lemma}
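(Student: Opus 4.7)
The plan is to invoke the transformation formula $\tilde A = (J^{-1})^T A J^{-1} + C$ derived in the paragraph immediately preceding the lemma. Since $H_\vtheta$ is Lagrangian and transversal to $V_\vtheta$, the same discussion shows that in the initial coordinates $x$ it has the form $\{\xbf\cdot\nabla_x + A\xbf\cdot\nabla_\xi : \xbf\in\R^n\}$ for a uniquely determined symmetric matrix $A$. The task therefore reduces to constructing local coordinates $\tilde x$ near the base point in which the associated matrix $\tilde A$ vanishes, because in such coordinates $H_\vtheta$ is automatically spanned by the vectors $\partial_{\tilde x_i}$.

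To produce such coordinates I would arrange the linear part of the change of variables to be trivial, that is, $J=I$ at the base point, so that the equation $\tilde A=0$ simplifies to $C=-A$. The remaining freedom lies in the Hessians $\partial^2 x_k/(\partial \tilde x_i \partial \tilde x_j)$, which can be prescribed arbitrarily as symmetric matrices and which enter $C$ only through the weighted sum $\sum_k \xi_k\,\partial^2 x_k /(\partial\tilde x_i\,\partial \tilde x_j)$. Because $\xi\ne 0$, some component $\xi_{k_0}$ is nonzero, and I can therefore load the required Hessian entirely into the single component $x_{k_0}$. Concretely, after translating so that the base point corresponds to the origin, I would define $x_k(\tilde x)=\tilde x_k$ for $k\ne k_0$ and
$$
x_{k_0}(\tilde x)\ =\ \tilde x_{k_0} \,-\, \frac{1}{2\xi_{k_0}}\sum_{i,j} A_{ij}\,\tilde x_i \tilde x_j.
$$
The Jacobian of this map at the origin is the identity, so by the inverse function theorem $\tilde x$ is a genuine coordinate system in a neighbourhood of $\vtheta$; only $k=k_0$ contributes to $C_{ij}$, and its contribution is $-A_{ij}$, giving $\tilde A=A+C=0$.

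The argument is essentially an algebraic consequence of the transformation rule (\ref{lagr1}), and the only ingredient on which it genuinely depends is the hypothesis $\xi\ne 0$, which is precisely what makes the second-order part of the coordinate change surjective onto the space of symmetric matrices and thereby allows an arbitrary $-A$ to be realised. No analytic obstacle arises beyond observing that the candidate map is a local diffeomorphism, which is immediate from its Jacobian being the identity at the base point.
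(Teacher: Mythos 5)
Your proposal is correct and follows exactly the same route as the paper: represent $H_\vtheta$ by a symmetric matrix $A$, invoke the transformation rule $\tilde A=(J^{-1})^TAJ^{-1}+C$, and use $\xi\neq0$ to make $\tilde A=0$. You additionally write down the explicit quadratic coordinate change that the paper only asserts exists, and the computation checks out ($J=I$ and $C=-A$, so $\tilde A=0$).
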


One can think of $H_\vtheta$ as a horizontal subspace of $T_\vtheta T^*M$ associated with the coordinates $\tilde x$. 

\subsection{Kashiwara index}\label{s:def-2}

Let $L_1$ and $L_2$ be arbitrary Lagrangian subspaces of $T_\vtheta T^*M$. Consider the quadratic form 
$$
Q_{L_1,V_\vtheta,L_2}[\theta_1,\theta,\theta_2]\ =\ \om(\theta_1,\theta)+\om(\theta,\theta_2)+\om(\theta_2,\theta_1)
$$
on the vector space $L_1\oplus V_\vtheta\oplus L_2$,
where $\om$ denotes the symplectic form $\dr x\wedge\dr\xi$, $\theta\in V_\vtheta$ and $\theta_j\in L_j$. Its signature $\sgn Q_{L_1,V_\vtheta,L_2}$ is called the {\it Kashiwara index} of the triple $\{L_1,V_\vtheta,L_2$\} (see, for instance, \cite[Section 7.8]{RS}). Since the vertical subspace is fixed, we shall drop $V_\vtheta$ from the notation and denote
\begin{equation}\label{kashiwara}
\kappa(L_1,L_2)\ =\ \sgn Q_{L_1,V_\vtheta,L_2}\,.
\end{equation}

Assume that $\vtheta=(x,\xi)$ with $\xi\ne0$, and let $H_\vtheta$ be the horizontal subspace associated with coordinates $x$. A general Lagrangian subspace   $L\in T_\vartheta^*M$ can be written in the form
$$
L\ =\ \left\{B\wbf\cdot\nabla_x+C\wbf\cdot\nabla_\xi\mid\wbf\in\R^n\right\}
$$
where $B$ and $C$ are real $n\times n$-matrices such that $\rank(B,C)=n$ and $B^TC=C^TB$. One can easily show that
\begin{equation}\label{kashiwara1}
\kappa(L,H_\vtheta)\ =\ \kappa_+(B^TC)\,.
\end{equation}

Denote 
$$
r(L_1,L_2)=n+\dim\left(L_1\bigcap V_\vtheta\right)-\dim\left(L_2\bigcap V_\vtheta\right)-\dim\left(L_1\bigcap L_2\right),
$$
and let
\begin{equation}\label{kashiwara0}
\varkappa(L_1,L_2)\ =\ \frac12\left(\kappa(L_1,L_2)+r(L_1,L_2)\right)\,.
\end{equation}
If $H_\vtheta$ is a horizontal subspace transversal to the Lagrangian subspaces $L_1$ and $L_2$ then $L_j=\left\{A_j\vbf_j\cdot\nabla_x+\vbf_j\cdot\nabla_\xi\mid\vbf_j\in\R^n\right\}$ in the associated coordinates $x$, where $A_j\in\R^{n\times n}$ are symmetric matrices. Obviously, $\,\rank A_j=n-\dim\left(L_j\bigcap V_\vtheta\right)\,$ and $\,\rank(A_1-A_2)=n-\dim\left(L_1\bigcap L_2\right)\,$, so that
\begin{equation}\label{rank0}
r(L_1,L_2)\ =\ \rank A_2\,-\,\rank A_1\,+\,\rank(A_1-A_2)\,.
\end{equation}
Parametrizing  $L_j$ by $\vbf_j\in\R^n$ and $V_\vtheta$ by $\vbf\in\R^n$, we obtain 
$$
Q_{L_1,V_\vtheta,L_2}[\vbf_1,\vbf,\vbf_2]\ =\ \frac12\,\langle{\mathcal Q}(\vbf_1,\vbf,\vbf_2),(\vbf_1,\vbf,\vbf_2)\rangle,
$$
where 
$$
\mathcal Q=\begin{pmatrix}0 & A_1 & A_2-A_1\\ A_1 & 0 & -A_2\\ A_2-A_1 & - A_2 & 0\end{pmatrix}
$$
If $\,J=\begin{pmatrix}0&1&1\\1&0&1\\1&1&0\end{pmatrix}\,$ then $\,J^T{\mathcal Q}J=\begin{pmatrix}-2A_2&0&0\\0&2(A_2-A_1)&0\\0&0&2A_1\end{pmatrix}\,$. It follows that 
\begin{equation}\label{kashiwara2}
\kappa(L_1,L_2)\ =\ \sgn A_1-\sgn A_2-\sgn(A_1-A_2)\,.
\end{equation}
and, consequently,
\begin{equation}\label{kashiwara3}
\varkappa(L_1,L_2)\ =\ \kappa_-(A_2)\,-\,\kappa_-(A_1)\,+\,\kappa_-(A_1-A_2)\,.
\end{equation}
In particular, the above equality implies that $\varkappa(L_1,L_2)\in\Z$ for all Lagrangian subspaces $L_1,L_2\subset T_\vtheta T^*M$

\section{Canonical transformations in $T^*M$}\label{s:transform}

\subsection{Definitions}\label{s:transform-def}

Recall that a mapping
$$
\Phi:(y,\eta)\ \mapsto\ (x^\star(y,\eta),\xi^\star(y,\eta))
$$ 
from an open subset $\DC(\Phi)\subset T^*M$ into $T^*M$
is said to be a canonical transformation if it preserves the symplectic 2-form $\dr x\wedge\dr\xi$ on $TT^*M$ or, in other words, if 
\begin{equation}\label{preserve-2a}
(\xi^\star_y)^Tx^\star_y-(x^\star_y)^T\xi^\star_y\ =\ (\xi^\star_\eta)^Tx^\star_\eta-(x^\star_\eta)^T\xi^\star_\eta\ =\ 0
\end{equation}
and
\begin{equation}\label{preserve-2b}
 (\xi^\star_\eta)^Tx^\star_y-(x^\star_\eta)^T\xi^\star_y\ =\ I\,.
\end{equation}

A canonical transformation is nondegenerate. It is said to be homogeneous if 
\begin{equation}\label{homo}
(x^\star(y,\lambda\eta),\xi^\star(y,\lambda\eta))=(x^\star(y,\eta),\lambda\xi^\star(y,\eta))\,,\qquad\forall\lambda>0\,.
\end{equation}
A homogeneous canonical transformation also preserves the symplectic 1-form $\xi\cdot\dr x$, that is, 
\begin{equation}\label{preserve-1}
(x^\star_\eta)^T\xi^\star=0
\quad\text{and}\quad 
(x^\star_y)^T\xi^\star=\eta\,.
\end{equation}

Let us fix $\vtheta=(y,\eta)\in\DC(\Phi)$ and local coordinates $x$ in a neighbourhood of $x^\star(y,\eta)$. The differential $\dr\Phi$ maps  the vertical subspace $V_\vtheta$ onto a Lagrangian subspace $\dr\Phi(V_\vtheta)\subset T_{\Phi(\vtheta)}T^*M$. In the local coordinates, the restriction $\left.\dr\Phi\right|_{V_\vtheta}$ is given by the matrix $(x^\star_\eta,\xi^\star_\eta)\in\R^{2n\times n}$ with columns $(x^\star_{\eta_k},\xi^\star_{\eta_k})$, $k=1,\ldots,n$, and
\begin{equation}\label{image1}
\dr\Phi(V_\vtheta)\ =\ \left\{x^\star_\eta\,\wbf\cdot\nabla_x\,+\,\xi^\star_\eta\,\wbf\cdot\nabla_\xi\mid\wbf\in\R^n\right\}\,.
\end{equation}

The matrix $x^\star_\eta\in\R^{n\times n}$ is the coordinate representation of the composition $\,\Pi_{\Phi(\vtheta)}^V\circ\dr\Phi:V_\vtheta\mapsto T^\star_{x^\star}M$. It is invariantly defined and behaves as a tensor under change of coordinates $x$ and $y$. Since $\Phi$ is nondegenerate, $\ker x^\star_\eta=\dr\Phi(V_\vtheta)\bigcap V_{\Phi(\theta)}$.

Similarly, the matrix $\xi^\star_\eta$ is the coordinate representation of the mapping $\,\Pi_{\Phi(\vtheta)}^H\circ\dr\Phi:V_\vtheta\mapsto V_{\Phi(\vtheta)}\,$, where $\Pi_{\Phi(\vtheta)}^H:T_{\Phi(\vtheta)}T^*M\mapsto V_{\Phi(\vtheta)}$ is the projection onto the vertical subspace $V_{\Phi(\vtheta)}$ along  $H_{\Phi(\vtheta)}$. The matrix $\xi^\star_\eta$ also behaves as a tensor under a change of coordinates $y$. The transformation law for $\xi^\star_\eta$ under a change of coordinates $x$ is given by \eqref{lagr1}. It is nondegenerate if and only if $H_{\Phi(\vtheta)}$ is transversal to the image $\dr\Phi(V_\vtheta)$. 

\subsection{Phase functions associated with homogeneous canonical transformations}\label{s:transform-phase}

Let $\Ff_\Phi$ be the set of functions 
$$
\varphi(x;y,\eta)\in C^\infty\left(M\times\DC(\Phi)\setminus\{0\}\right)
$$ 
with $\im\varphi\geq0$, which satisfy the following conditions.
\begin{enumerate}
\item[{\bf(a$_1$)}]
$\varphi$ is positively homogeneous in $\eta$ of degree 1;
\item[{\bf(a$_2$)}]
$\varphi(x;y,\eta)=(x-x^\star)\cdot\xi^\star+O\left(|x-x^\star|^2\right)$ as $x\to x^\star$;
\item[{\bf(a$_3$)}]
$\det\varphi_{x\eta}(x^\star(y,\eta);y,\eta)\ne0$ for all $(y,\eta)\in \DC(\Phi)\setminus\{0\}$.
\end{enumerate}

The first equality \eqref{preserve-1} and {\bf(a$_2$)} imply that 
\begin{equation}\label{phi0-eta}
\varphi_{\eta_k}(x;y,\eta)\ =\ (x-x^\star)\cdot\left(\xi^\star_{\eta_k}-\varphi_{xx}(x^\star;y,\eta)\,x^\star_{\eta_k}\right)+O\left(|x-x^\star|^2\right)
\end{equation}
 as $x\to x^\star$ for all $k=1,\ldots,n$. Therefore
\begin{align}
\varphi_{x\eta}(x^\star;y,\eta)\ & =\ \xi^\star_\eta\;-\;\varphi_{xx}(x^\star;y,\eta)\,x^\star_\eta\,, 
\label{phi0-x-eta}\\
\varphi_{\eta\eta}(x^\star;y,\eta)\ & =\ -\,(x^\star_\eta)^T\varphi_{x\eta}(x^\star;y,\eta) 
\label{phi0-eta-eta}
\end{align}
and, in view of {\bf(a$_3$)},
\begin{equation}\label{rank2}
\rank\varphi_{\eta\eta}(x^\star;y,\eta)\ =\ \rank x^\star_\eta(y,\eta)\,.
\end{equation}

Let $\vtheta=(y,\eta)\in\DC(\Phi)\setminus\{0\}$. Every function $\varphi$ satisfying the conditions {\bf(a$_1$)} and {\bf(a$_2$)} defines a horizontal bundle $H^\varphi$ over the image $\Phi(T^*M)$ formed by the subspaces
\begin{equation}\label{horizontal1}
H_{\Phi(\vtheta)}^\varphi\ =\ \left\{\xbf\cdot\nabla_x-A\xbf\cdot\nabla_\xi\mid\xbf\in\R^n\right\}\ \subset\ T_{\Phi(\vtheta)}T^*M\,,
\end{equation}
where $A=\left(\re\varphi\right)_{xx}(x^\star;y,\eta)$.  
Lemma \ref{l:horizontal} and \eqref{horizontal1} imply 

\begin{corollary}\label{c:horizontal}
If $\varphi$ satisfies {\bf(a$_1$)} and {\bf(a$_2$)} then for each fixed point $\,(y,\eta)\in\DC(\Phi)\setminus\{0\}\,$ there exist local coordinates $x$ on a neighbourhood of $x^\star(y,\eta)$ such that $\,\left(\re\varphi\right)_{xx}(x^\star;y,\eta)=0\,$.
\end{corollary}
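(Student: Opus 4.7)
The plan is to apply Lemma \ref{l:horizontal} directly to the Lagrangian subspace $H^\varphi_{\Phi(\vtheta)}$ introduced in \eqref{horizontal1}.

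First I would verify the hypotheses of the lemma at the footpoint $\Phi(\vtheta) = (x^\star, \xi^\star)$. The matrix $A := (\re\varphi)_{xx}(x^\star; y, \eta)$ is real symmetric, so, as recalled in Subsection \ref{s:def-1}, the set $H^\varphi_{\Phi(\vtheta)} = \{\xbf\cdot\nabla_x - A\xbf\cdot\nabla_\xi \mid \xbf\in\R^n\}$ is a Lagrangian subspace transversal to $V_{\Phi(\vtheta)}$. Moreover $\xi^\star\ne 0$: the second identity in \eqref{preserve-1} gives $(x^\star_y)^T\xi^\star = \eta$, and $\eta\ne 0$ by hypothesis. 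Lemma \ref{l:horizontal} then supplies local coordinates $\tilde x$ in a neighbourhood of $x^\star$ in which $H^\varphi_{\Phi(\vtheta)}$ is spanned by the vectors $\partial_{\tilde x_i}$; equivalently, the symmetric matrix describing $H^\varphi_{\Phi(\vtheta)}$ via formula \eqref{horizontal1} in the new chart is identically zero.

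To translate this into the stated property of $\re\varphi$, I would note that both conditions {\bf(a$_1$)} and {\bf(a$_2$)} are preserved under changes of the $x$-coordinates, so the phase function $\tilde\varphi(\tilde x; y, \eta) := \varphi(x(\tilde x); y, \eta)$ belongs to $\Ff_\Phi$ read in the new chart and defines the same horizontal subspace $H^\varphi_{\Phi(\vtheta)}$ through \eqref{horizontal1}. Combining this with the previous paragraph yields $(\re\tilde\varphi)_{\tilde x\tilde x}(x^\star; y, \eta) = 0$, which is the conclusion of the corollary (after relabelling $\tilde x \mapsto x$).

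The one non-routine step is the intrinsic character of $H^\varphi$: one must check that \eqref{horizontal1}, when applied to $\tilde\varphi$ in the coordinates $\tilde x$, produces the same Lagrangian subspace of $T_{\Phi(\vtheta)}T^*M$ as it did for $\varphi$ in the original coordinates. Concretely, this reduces to comparing the transformation rule \eqref{lagr1} for the symmetric matrix parametrizing a Lagrangian subspace transversal to $V_{\Phi(\vtheta)}$ with the chain-rule expansion of $\tilde\varphi_{\tilde x\tilde x}$ at $x^\star$. The matching works out thanks to {\bf(a$_2$)}, which gives $\varphi_x(x^\star; y, \eta) = \xi^\star$, so that the chain rule produces the extra term $\sum_k \xi^\star_k\, (x_k)_{\tilde x\tilde x}$; this is precisely the matrix $C$ of \eqref{lagr1} evaluated at $\xi = \xi^\star$. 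I expect this verification to be the main, though essentially mechanical, obstacle in the write-up; once it is in hand, the corollary is an immediate consequence of Lemma \ref{l:horizontal}.
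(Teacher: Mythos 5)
Your approach is exactly the one the paper intends: the text immediately following \eqref{horizontal1} reads ``Lemma \ref{l:horizontal} and \eqref{horizontal1} imply Corollary \ref{c:horizontal}.'' You have also correctly identified the one step that needs checking, namely that the Lagrangian subspace attached to $\varphi$ via \eqref{horizontal1} is intrinsically defined, so that applying Lemma \ref{l:horizontal} to it really does produce the vanishing of the Hessian. Your observation that $\varphi_x(x^\star;y,\eta)=\xi^\star$ is what makes the second-derivative chain-rule correction coincide with the matrix $C$ of \eqref{lagr1} is precisely the right reason.

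One caution when you carry out the ``mechanical'' verification: with the sign as printed in \eqref{horizontal1}, the matrix describing $H^\varphi_{\Phi(\vtheta)}$ in the sense of the discussion preceding Lemma \ref{l:horizontal} is $-A$, and $-A$ transforms to $-(J^{-1})^T A J^{-1}+C$, whereas the chain rule gives $(\re\tilde\varphi)_{\tilde x\tilde x}(x^\star)=(J^{-1})^T A J^{-1}+C$; these are negatives of each other only when $C=0$. The sign in \eqref{horizontal1} should read $+A\xbf\cdot\nabla_\xi$, and indeed that sign is what makes the subsequent sentence of the paper (identifying $(\re\varphi)_{x\eta}(x^\star)$ with the coordinate matrix of $\Pi^{H^\varphi}_{\Phi(\vtheta)}\circ\dr\Phi|_{V_\vtheta}$) correct via \eqref{phi0-x-eta}. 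With that correction the matching you anticipate goes through verbatim. Alternatively, you can sidestep the bundle $H^\varphi$ entirely and argue directly: the Hessian $(\re\varphi)_{xx}(x^\star)$ obeys the transformation law $\tilde A=(J^{-1})^T A J^{-1}+C$, which is exactly the law displayed before Lemma \ref{l:horizontal}, and since $\xi^\star\ne0$ (by \eqref{preserve-1} and $\eta\ne0$, as you note) one can choose the change of coordinates so that $\tilde A=0$. Either route gives a complete proof.
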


The other way around, for every horizontal bundle $H$ over $\Phi(T^*M)\setminus\{0\}$ there exists a phase function satisfying the conditions {\bf(a$_1$)} and {\bf(a$_2$)} such that $H=H^\varphi$. 

The matrix $\left(\re\varphi\right)_{x\eta}(x^\star;y,\eta)$ is the coordinate representation of the mapping $\,\Pi_{\Phi(\vtheta)}^{H^\varphi}\circ\left.\dr\Phi\right|_{V_\vtheta}$ where $\,\Pi_{\Phi(\vtheta)}^{H^\varphi}$ is the projection onto $V_{\Phi(\vtheta)}$ along the horizontal subspace $H_{\Phi(\vtheta)}^\varphi$. It is nondegenerate if and only if $H_{\Phi(\vtheta)}^\varphi$ is transversal to $\dr\Phi(V_\vtheta)$. This shows that the existence of a real phase function $\varphi$ satisfying {\bf(a$_1$)}--{\bf(a$_3$)} on an open set $\OC\subset\DC(\Phi)$
is equivalent to the following condition
\begin{enumerate}
\item[\bf(C$_1$)]
there is smooth family of Lagrangian subspaces $H_{\Phi(\vtheta)}$ transversal both to $V_{\Phi(\vtheta)}$ and $\dr\Phi(V_\vtheta)$ for all $\vtheta\in\OC\,$.
\end{enumerate}

The condition {\bf(C$_1$)} can always be satisfied locally (that is, by choosing a sufficiently small $\OC$). However, it may not be fulfilled globally, even if $\DC(\Phi)$ is connected and simply connected.

On the other hand, {\bf(a$_3$)} holds for almost all complex-valued phase functions satisfying {\bf(a$_1$)} and {\bf(a$_2$)}. The following is \cite[Corollary 2.4.5]{SV} (alternatively, see \cite[Lemma 1.4]{LSV}). 

\begin{lemma}\label{l:phase}
If $\varphi$ satisfies {\bf(a$_1$)} and {\bf(a$_2$)} and
the symmetric matrix $\left(\im \varphi\right)_{xx}(x^\star;y,\eta)$ is positive definite then $\,\det\varphi_{x\eta}(x^\star;y,\eta)\ne0$.
\end{lemma}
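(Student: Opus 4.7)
The plan is to show that $\varphi_{x\eta}(x^\star;y,\eta)$ has trivial kernel over $\C^n$. By identity \eqref{phi0-x-eta},
$$\varphi_{x\eta}(x^\star;y,\eta)\;=\;\xi^\star_\eta\;-\;A\,x^\star_\eta,\qquad A:=\varphi_{xx}(x^\star;y,\eta),$$
where $A$ is complex symmetric with $\im A$ positive definite. Suppose $\wbf\in\C^n$ satisfies $(\xi^\star_\eta-A\,x^\star_\eta)\wbf=0$, and set $u:=x^\star_\eta\wbf\in\C^n$, so that $\xi^\star_\eta\wbf=Au$. The goal is to deduce $\wbf=0$.

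The main step is a positivity/reality argument applied to the scalar
$$
\bar u^T A u\;=\;\bar u^T\xi^\star_\eta\wbf\;=\;\bar\wbf^T(x^\star_\eta)^T\xi^\star_\eta\,\wbf.
$$
For any real symmetric $n\times n$ matrix $M$ and any $v\in\C^n$, writing $v=v_r+\ir v_i$ and using $M^T=M$ gives $\bar v^T M v=v_r^T M v_r+v_i^T M v_i\in\R$; and if $M$ is also positive definite, this quantity vanishes only when $v=0$. Applying this first with $M=\im A$ yields $\im(\bar u^T A u)=\bar u^T(\im A)u\geq 0$, with equality iff $u=0$. Applying it with $M=(x^\star_\eta)^T\xi^\star_\eta$ (which is real and symmetric thanks to the first Lagrangian identity in \eqref{preserve-2a}) yields $\im\bigl(\bar\wbf^T(x^\star_\eta)^T\xi^\star_\eta\,\wbf\bigr)=0$. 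Equating the two expressions for $\bar u^T A u$ forces $\bar u^T(\im A)u=0$, hence $u=x^\star_\eta\wbf=0$, and then also $\xi^\star_\eta\wbf=Au=0$.

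It remains to deduce $\wbf=0$. The restriction $\dr\Phi|_{V_\vtheta}$ is injective, because $\Phi$ is a (nondegenerate) canonical transformation and $\dr\Phi$ is a linear symplectic isomorphism of tangent spaces. In local coordinates this restriction is represented by the $2n\times n$ matrix with blocks $x^\star_\eta$ and $\xi^\star_\eta$ (cf.\ \eqref{image1}), so $x^\star_\eta\wbf=0$ and $\xi^\star_\eta\wbf=0$ together imply $\wbf=0$. Consequently $\varphi_{x\eta}(x^\star;y,\eta)$ is invertible.

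The one place that needs a little care is the clean separation of real and imaginary parts of $\bar v^T M v$ for $M$ real symmetric and $v\in\C^n$; the identity used above, $\bar v^T M v=v_r^T M v_r+v_i^T M v_i$, relies on $v_r^T M v_i=v_i^T M v_r$, which is precisely the symmetry of $M$. Everything else is bookkeeping with \eqref{phi0-x-eta} and \eqref{preserve-2a}.
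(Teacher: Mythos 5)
The paper does not reproduce a proof here (it cites \cite[Corollary 2.4.5]{SV} and \cite[Lemma 1.4]{LSV}), so a line-by-line comparison is not possible; your argument is nevertheless correct and is the standard positivity argument for such complex phase functions. The chain of steps — using \eqref{phi0-x-eta} to reduce $\varphi_{x\eta}\wbf=0$ to $\xi^\star_\eta\wbf=A\,x^\star_\eta\wbf$ with $A=\varphi_{xx}$, noting via \eqref{preserve-2a} that $(x^\star_\eta)^T\xi^\star_\eta$ is real symmetric so that $\bar u^{T}Au$ is real while $\im(\bar u^{T}Au)=\bar u^{T}(\im A)u\ge 0$ vanishes only at $u=0$, and then closing with injectivity of $\dr\Phi|_{V_\vtheta}$ to get $\wbf=0$ — is exactly the proof in the cited sources.
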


\begin{remark}\label{r:connected}
The above result implies, in particular, that the set $\Ff_\Phi$ is connected and simply connected. This allows one to prove various results by continuously transforming one phase function into another (see, for instance, \cite[Section 2.7.4]{SV}).
\end{remark}

\begin{example}\label{e:phi-1}
Let $\OC\subset\DC(\Phi)$ be an open conic set. If $\OC$ is sufficiently small, we can choose local coordinates $x$ on a neighbourhood of the projection of $\Phi(\OC)$ onto $M$ and define $\varphi(x;y,\eta)=(x-x^\star)\cdot\xi^\star$. Obviously, this function satisfies the conditions {\bf(a$_1$)}, {\bf(a$_2$)}, $\,\varphi_{xx}\equiv0\,$, $\,\varphi_{x\eta}=\xi^\star_\eta\,$ and, in view of \eqref{preserve-2a},
\begin{equation}\label{phi0-example}
\varphi_{\eta\eta}(x^\star;y,\eta)\ =\ -(x^\star_\eta)^T\xi^\star_\eta\ =\ -(\xi^\star_\eta)^Tx^\star_\eta\,.
\end{equation}
The equality $\,\varphi_{xx}\equiv0\,$ means that the horizontal bundle $H^\varphi$ over $\Phi(\OC)$ coincides with the horizontal bundle $H$ associated with the coordinates $x$. The 
function $\varphi$ satisfies {\bf(a$_3$)} on $\OC$ if and only if $H_{\Phi(\vtheta)}$ is transversal to $\dr\Phi(V_\vtheta)$ for all $\vtheta\in\OC$. In this case, by \eqref{image1},
\begin{equation}\label{image2}
\dr\Phi(V_\vtheta)\ =\ \left\{x^\star_\eta\,(\xi^\star_\eta)^{-1}\wbf\cdot\nabla_x\,+\,\wbf\cdot\nabla_\xi\mid\wbf\in\R^n\right\},\quad\forall\vtheta\in\OC\,.
\end{equation}
\end{example}

Example \ref{e:phi-1} will play a crucial role in the proof of Theorem \ref{t:composition}. We shall first obtain a local result, using the phase function $(x-x^\star)\cdot\xi^\star$, and then show that it holds globally because all the objects involved do not depend on the choice of phase function.

\subsection{Maslov index}\label{s:transform-maslov}

Let $C=C_1+iC_2\in\C^{n\times n}$ be a symmetric matrix with a nonnegative (in the sense of operator theory) real part $C_1$, and let $\Pi_C$ be the orthogonal projection on $\ker C$. We shall denote 
$$
{\det}_+\,C\ = \det(C+\Pi_C)\,,
$$
assuming that the branch of the argument $\arg{\det}_+\,C$ is chosen in such a way that it is continuous with respect to $C$ on the set of matrices with a fixed kernel and is equal to zero when $C_2=0$.  

\begin{remark}\label{r:c1}
If $\dim\ker C=m$ then $\det_+C=\eps^{-m}\left.\det(C+\eps I)\right|_{\eps=0}$. This implies that
\begin{enumerate}
\item[(i)]
$\arg\det_+C=\arg\det_+(J^TCJ)$ for all nondegenerate $J\in\R^{n\times n}$,
\item[(ii)]
the restriction of $\arg\det_+C$ to the set of matrices $C$ with a fixed $\dim\ker C$ continuously depends on $C$.
\end{enumerate}
\end{remark}

\begin{remark}\label{r:c2}
If $C_1=0$ then $\arg\det_+C=\frac\pi2\,\sgn C_2$ where $\sgn C_2$ is the signature of $C_2$ (see \cite[Section 3.4]{H2}).
\end{remark}

Note that the matrices $\varphi_{x\eta}$ and $\varphi_{\eta\eta}$ behave as tensors under change of coordinates $x$ and $y$. It follows that the functions  $\arg(\det^2\varphi_{x\eta})$ and, in view of Remark \ref{r:c1}(i), $\arg{\det}_+(\varphi_{\eta\eta}/i)$ do not depend on the choice of local coordinates. Let us denote
\begin{align}
\Theta_\varphi^\rr(y,\eta)\ & =\ (2\pi)^{-1}\arg{\det}^2\varphi_{x\eta}(x^\star,y,\eta), 
\label{Theta-r}\\
\Theta_\varphi^\sr(y,\eta)\ & =\ \pi^{-1}\arg{\det}_+(\varphi_{\eta\eta}(x^\star,y,\eta)/i)
-\left(\rank x^\star_\eta\right)/2\,. 
\label{Theta-s}
\end{align}

The following is \cite[Proposition 2.3]{LSV}.

\begin{proposition}\label{p:Theta}
The multi-valued function 
\begin{equation*}
\Theta_\Phi\ =\ \Theta_\varphi^\rr\,-\,\Theta_\varphi^\sr
\end{equation*}
takes integer values and does not depend on the choice of the phase function $\varphi\in\Ff_\Phi$ and local coordinates. The branches of $\Theta_\Phi$ are continuous along any path on which $\rank x^\star_\eta$ is constant.
\end{proposition}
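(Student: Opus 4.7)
My plan is to prove the proposition in three parts: (i) existence of continuous branches along paths with $\rank x^\star_\eta$ constant, (ii) integrality of each branch, and (iii) independence of the resulting multi-valued function from $\varphi\in\Ff_\Phi$ and coordinates.

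Part (i) I would handle using \eqref{rank2} to get $\rank\varphi_{\eta\eta}(x^\star;y,\eta)=\rank x^\star_\eta$ constant along the path, so $\arg\det_+(\varphi_{\eta\eta}/i)$ has a continuous branch by Remark \ref{r:c1}(ii), while \textbf{(a$_3$)} keeps $\arg\det^2\varphi_{x\eta}$ continuous. For (iii), $\varphi_{x\eta}$ and $\varphi_{\eta\eta}$ transform tensorially with real Jacobians, so $\det^2\varphi_{x\eta}$ acquires only positive real factors (invariance of $\arg$ mod $2\pi$), while $\arg\det_+(\varphi_{\eta\eta}/i)$ is invariant mod $2\pi$ by Remark \ref{r:c1}(i); along a path in $\Ff_\Phi$ (which exists by Remark \ref{r:connected}) $\rank x^\star_\eta$ depends only on $\Phi$, so by (i) the branches deform continuously. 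Combined with (ii) this shows that branches at different coordinate systems or at different $\varphi$ differ by integers, so the multi-valued function is invariant.

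The core work is (ii), which I would do by computing at a model phase function. Choose local coordinates near $x^\star(y,\eta)$ in which
\[
\varphi^{\mathrm{std}}(x;y,\eta)\ :=\ (x-x^\star)\cdot\xi^\star\,+\,\tfrac{i}{2}|\eta|\,|x-x^\star|^2
\]
belongs to $\Ff_\Phi$ by Lemma \ref{l:phase}. Using the freedom $A\mapsto J_x A J_y^T$ available from linear changes of $x$-coordinates and of basis in $T^*_yM$, reduce $A:=x^\star_\eta$ to $\bigl(\begin{smallmatrix}I_r&0\\0&0\end{smallmatrix}\bigr)$ with $r=\rank A$. The symmetry $A^T\xi^\star_\eta=(\xi^\star_\eta)^TA$ coming from \eqref{preserve-2a} forces $\xi^\star_\eta$ to be block lower triangular with symmetric top-left block, and using \eqref{phi0-x-eta}--\eqref{phi0-eta-eta} a direct computation gives
\[
B:=\varphi^{\mathrm{std}}_{x\eta}(x^\star)\ =\ \begin{pmatrix}B_{11}&0\\B_{21}&B_{22}\end{pmatrix},\qquad \varphi^{\mathrm{std}}_{\eta\eta}(x^\star)/i\ =\ \begin{pmatrix}iB_{11}&0\\0&0\end{pmatrix},
\]
with $B_{22}=(\xi^\star_\eta)_{22}$ real and nondegenerate. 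From the definition of $\det_+$ one obtains $\det_+(\varphi^{\mathrm{std}}_{\eta\eta}/i)=i^r\det B_{11}$, the $\arg\det B_{11}$ and $r/2$ contributions cancel between $\Theta^\rr$ and $\Theta^\sr$, and
\[
\Theta_\Phi(y,\eta)\ =\ \pi^{-1}\arg\det B_{22}\ \in\ \{0,1\}\ \subset\ \Z,
\]
giving integrality of this branch at $\varphi^{\mathrm{std}}$. By (i) and connectedness of $\Ff_\Phi$, integrality transfers to every $\varphi\in\Ff_\Phi$.

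The main technical obstacle will be the final step when $r<n$: one has to verify that $\ker\varphi^{\mathrm{std}}_{\eta\eta}(x^\star)/i$ coincides with $\ker x^\star_\eta$ (so that the block reduction is consistent with the definition of $\det_+$), and that the branch of $\arg\det_+$ read off from the block formula agrees with the continuous branch anchored at $C_2=0$ through Remark \ref{r:c2}.
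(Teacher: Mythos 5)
The paper itself does not prove Proposition \ref{p:Theta} --- it cites \cite[Proposition 2.3]{LSV} --- so your argument has to stand on its own. Your block-reduction strategy is sound, the identities you extract from \eqref{preserve-2a} and \eqref{phi0-x-eta}--\eqref{phi0-eta-eta} are correct, and the computation at $\varphi^{\mathrm{std}}$ does give ${\det}_+(\varphi^{\mathrm{std}}_{\eta\eta}/i)=i^r\det B_{11}$ and $\Theta_\Phi=\pi^{-1}\arg\det B_{22}$ with $B_{22}=(\xi^\star_\eta)_{22}$ real and nondegenerate. The two ``technical obstacles'' you flag at the end are real but harmless: $B_{11}=(\xi^\star_\eta)_{11}-i|\eta|I_r$ is automatically invertible (a real symmetric matrix has no eigenvalue $i|\eta|$), so $\ker(\varphi^{\mathrm{std}}_{\eta\eta}/i)$ is exactly the lower $(n-r)$-block; and the branch of $\arg{\det}_+$ is the one obtained by shrinking $(\xi^\star_\eta)_{11}$ to $0$, under which $\arg\det(iB_{11})$ runs continuously to $0$, consistent with Remark \ref{r:c2}.

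The genuine gap is the sentence ``By (i) and connectedness of $\Ff_\Phi$, integrality transfers to every $\varphi\in\Ff_\Phi$.'' Continuity of $\Theta_\Phi^{\varphi_t}$ along a homotopy plus integrality at $t=0$ does not give integrality at $t=1$ --- a continuous real function equal to an integer at one endpoint need not be an integer at the other --- and this also makes the ``combined with (ii)'' step in (iii) circular, since you would first need integrality for \emph{every} $\varphi$ in order to conclude constancy. The fix is already sitting inside your own computation: the block reduction works for every $\varphi\in\Ff_\Phi$, not just $\varphi^{\mathrm{std}}$. In coordinates where $x^\star_\eta=\bigl(\begin{smallmatrix}I_r&0\\0&0\end{smallmatrix}\bigr)$, the relations \eqref{phi0-x-eta}--\eqref{phi0-eta-eta} give, for any $\varphi$,
\[
\varphi_{x\eta}(x^\star)=\begin{pmatrix}C_{11}&0\\C_{21}&(\xi^\star_\eta)_{22}\end{pmatrix},
\qquad
\varphi_{\eta\eta}(x^\star)/i=\begin{pmatrix}iC_{11}&0\\0&0\end{pmatrix},
\]
because the vanishing columns of $x^\star_\eta$ kill the $\varphi_{xx}$-contribution outside the top-left block. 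The same cancellation of $\pi^{-1}\arg\det C_{11}$ and $r/2$ then gives $\Theta_\Phi=\pi^{-1}\arg\det(\xi^\star_\eta)_{22}\in\Z$ directly, for every $\varphi$; and since the right-hand side does not involve $\varphi$ at all, independence from the phase function drops out for free, after which your continuity and coordinate-invariance arguments go through as written.
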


By the above, the function $\Theta_\Phi$ is uniquely determined by the canonical transformation $\Phi$. It is multi-valued only due to the fact that $\arg{\det}^2\varphi_{x\eta}$ is multi-valued.  Note that, in view of \eqref{phi0-eta-eta} and Remark \ref{r:c1}(ii), $\Theta_\Phi^\sr$ continuously depends on $\varphi\in\Ff_\Phi$ and, obviously, so does $\Theta_\Phi^\rr$. It follows that the branches of $\Theta_\Phi$ do not change under a continuous transformation of $\varphi\in\Ff$. 

\begin{example}\label{e:Theta}
If $\,\left(\im\varphi\right)_{xx}(x^\star;y,\eta)=0$ on an open set $\Om\subset\DC(\Phi)$ then, in view of \eqref{rank2} and Remark \ref{r:c2}, 
\begin{equation}\label{Theta-real-1}
\Theta_\varphi^\sr(y,\eta)\ =\ -\,\kappa_+(\varphi_{\eta\eta}(x^\star;y,\eta))
\end{equation}
and, consequently,
\begin{equation}\label{Theta-real-2}
\Theta_\Phi(y,\eta)\ =\ m\,+\,\kappa_+(\varphi_{\eta\eta}(x^\star;y,\eta))
\end{equation}
for all $(y,\eta)\in\OC$, where $m$ is an integer depending on the choice of the branch of $\arg{\det}^2\varphi_{x\eta}$. 
\end{example}

\begin{definition}\label{d:index}
If $\gamma$ is a path in $\DC(\Phi)$ then $\,-\int_\gamma\dr\Theta_\Phi\,$ (understood as a Stieltjes integral) is said to the Maslov index of $\gamma$.
\end{definition}

Since the function $\,\Theta_\varphi^\sr\,$ is single-valued, we have $\,\int_\gamma\dr\Theta_\Phi=\int_\gamma\dr\Theta_\varphi^\rr\,$ for any closed path $\gamma\subset\DC(\Phi)$ and any $\varphi\in\Ff_\Phi$. It follows that  the de Rham cohomology class of the 1-form  $-(2\pi)^{-1}\dr\Theta_\varphi^\rr$ on $\DC(\Phi)$ does not depend on the choice of $\varphi\in\Ff_\Phi$. It is usually called the {\it Maslov class}. If the Maslov class is trivial then the Maslov index of any path $\gamma:[0,1]\to\DC(\Phi)$ is equal to $\Theta_\Phi(\gamma(0))-\Theta_\Phi(\gamma(1))$.

\begin{remark}
Let $\Lambda T^*M$ be the bundle of Lagrangian Grassmanians  over $T^*M$, and let $\Lambda^{(k)} T^*M$ be the subbundle whose fibre $\Lambda^{(k)}_\vtheta T^*M$ over the point $\vtheta\in T^*M$ consists of subspaces $L\subset\Lambda_\vtheta T^*M$ such that $\dim L\bigcap V_\vtheta\geq k$. The condition \eqref{homo} implies that $\dr\Phi(V_\vtheta)\in\Lambda^{(1)}_\vtheta T^*M$ for all $\vtheta\in T^*M$. Instead of the path $\gamma\subset\DC(\Phi)$, one can think of the corresponding path $\dr\Phi(V_\gamma)$ in $\Lambda T^*M$, and then the Maslov index can be interpreted as the index of intersection of $\dr\Phi(V_\gamma)$ with the set  $\Lambda^{(2)}T^*M$. Alternatively, one can consider the complex structure on $TT^*M$ with real and imaginary subspaces over $\vtheta\in T^*M$ being $V_\vtheta$ and $H_\theta^\varphi$. After that the Maslov class can be defined in the spirit of \cite{Ar}, as the cohomology class of the 1-form $-\dr (\arg{\det}^2U_\varphi)$ where the unitary matrix $U_\varphi$ is the radial part of the nondegenerate matrix $\varphi_{x\eta}(x^\star;y,\eta)$.
\end{remark}

\begin{remark} 
Proposition \ref{p:Theta} together with Remark \ref{r:c2} implies the following well known result (see, for instance, \cite[Theorem 3.2.1]{H1}).
\begin{proposition}\label{p:index2}
{\it Let $\OC_j$ be connected and simply connected open subsets of $\DC(\Phi)$, and let $\varphi_j$ be real-valued phase functions satisfying the conditions {\bf(a$_1$)}--{\bf(a$_3$)} on $\OC_j$. Then there exist integer numbers $m_{jk}$ such that
\begin{equation}\label{cech}
\frac12\,\sgn(\varphi_j)_{\eta\eta}(x^\star;y,\eta)\;-\;\frac12\,\sgn(\varphi_k)_{\eta\eta}(x^\star;y,\eta)\ =\ m_{jk}
\end{equation}
for all $(y,\eta)\in\OC_j\bigcap\OC_k$. }
\end{proposition}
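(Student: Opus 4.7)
The plan is to derive the statement from the description of $\Theta_\Phi$ in the real phase case given in Example \ref{e:Theta}, together with the intrinsic character of $\Theta_\Phi$ established in Proposition \ref{p:Theta}. The essential observation is that for a real-valued phase function, $\det^2\varphi_{x\eta}(x^\star;y,\eta)$ is strictly positive, so the multi-valuedness of $\Theta_\varphi^\rr$ collapses to an integer shift, making $\Theta_\varphi^\rr$ a constant on each connected open chart.

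First, since $\varphi_j$ is real-valued and $\det(\varphi_j)_{x\eta}(x^\star;y,\eta)\ne 0$ by \textbf{(a$_3$)}, the function $\det^2(\varphi_j)_{x\eta}(x^\star;y,\eta)$ is a strictly positive continuous function on the connected set $\OC_j$. Any continuous branch of its argument then takes values in $2\pi\Z$, so $\Theta_{\varphi_j}^\rr\equiv n_j$ on $\OC_j$ for some integer $n_j$ depending on the branch. Example \ref{e:Theta}, specifically \eqref{Theta-real-2}, then gives pointwise on $\OC_j$
\[
\Theta_\Phi(y,\eta)\ =\ n_j\,+\,\kappa_+\!\bigl((\varphi_j)_{\eta\eta}(x^\star;y,\eta)\bigr),
\]
and similarly $\Theta_\Phi(y,\eta)=n_k+\kappa_+\bigl((\varphi_k)_{\eta\eta}(x^\star;y,\eta)\bigr)$ on $\OC_k$.

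On $\OC_j\cap\OC_k$, both right-hand sides are branches of the intrinsic multi-valued function $\Theta_\Phi$. By Proposition \ref{p:Theta}, such branches are continuous along every path on which $\rank x^\star_\eta$ is constant, hence (being integer-valued) locally constant along such paths. The key point, which I expect to be the main obstacle, is that the two branches must jump by the same integer at any point where $\rank x^\star_\eta$ drops: this follows from the other part of Proposition \ref{p:Theta}, namely that the multi-valued $\Theta_\Phi$ does not depend on the phase function, so the jumps are intrinsic to $\Phi$ and cancel when one takes the difference. Consequently, the two branches differ by a locally constant integer on $\OC_j\cap\OC_k$, and therefore so does $\kappa_+((\varphi_j)_{\eta\eta})-\kappa_+((\varphi_k)_{\eta\eta})$.

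Finally, by \eqref{rank2}, $\rank(\varphi_j)_{\eta\eta}(x^\star;y,\eta)=\rank x^\star_\eta(y,\eta)=\rank(\varphi_k)_{\eta\eta}(x^\star;y,\eta)$, so the identity $\sgn=2\kappa_+-\rank$ for real symmetric matrices makes this common rank cancel in the difference of half-signatures, giving
\[
\tfrac12\sgn(\varphi_j)_{\eta\eta}(x^\star;y,\eta)-\tfrac12\sgn(\varphi_k)_{\eta\eta}(x^\star;y,\eta)\ =\ \kappa_+((\varphi_j)_{\eta\eta})-\kappa_+((\varphi_k)_{\eta\eta}),
\]
and the right-hand side is a locally constant integer, which we take as $m_{jk}$.
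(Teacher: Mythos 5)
Your proposal is correct and takes essentially the same route as the paper: the paper states that Proposition \ref{p:Theta} together with Remark \ref{r:c2} implies the result and leaves the details to the reader, and your argument supplies exactly those details — using the positivity of $\det^2\varphi_{x\eta}$ for a real phase to make $\Theta_\varphi^\rr$ a constant integer, invoking \eqref{Theta-real-2} from Example \ref{e:Theta}, and then appealing to the $\varphi$-independence of $\Theta_\Phi$ from Proposition \ref{p:Theta}.

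One place where the argument could be tightened: the step ``the two branches must jump by the same integer ... so the jumps are intrinsic to $\Phi$ and cancel when one takes the difference'' is stated somewhat heuristically. The cleaner formulation, which the paper's framework already supplies, is that all the multi-valuedness of $\Theta_\Phi$ resides in $\Theta_\varphi^\rr$, whose branches differ by \emph{constant} integers on a connected set; since $\Theta_\varphi^\sr$ is single-valued, the branches of $\Theta_\Phi$ differ by constant integers as well, and Proposition \ref{p:Theta} identifies the branch sets coming from $\varphi_j$ and from $\varphi_k$. Hence $n_j+\kappa_+((\varphi_j)_{\eta\eta})$ and $n_k+\kappa_+((\varphi_k)_{\eta\eta})$ differ by a constant integer on each connected component of $\OC_j\cap\OC_k$ — not merely a locally constant one along paths of fixed rank. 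The final reduction via $\sgn=2\kappa_+-\rank$ and \eqref{rank2} is exactly right. (You are also right to say ``locally constant''; the paper's formulation tacitly assumes $\OC_j\cap\OC_k$ connected, as is standard in the \v Cech cocycle context.)
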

Choosing an open cover $\{\OC_j\}$ of $\DC(\Phi)$ and real-valued phase functions $\varphi_j$ satisfying {\bf(a$_1$)}--{\bf(a$_3$)} on $\OC_j$, one obtains an integer cocycle on $\DC(\Phi)$ defined by \eqref{cech}. The corresponding \v Cech cohomology class is the image of the Maslov class under the standard isomorphism between de Rham and \v Cech cohomology groups. Corollary \ref{c:horizontal}, \eqref{kashiwara1} and \eqref{image1} imply that
\begin{equation}\label{kashiwara4}
\sgn\varphi_{\eta\eta}(x^\star;y,\eta)\ =\ -\,\kappa\left(\dr\Phi(V_\vtheta),H_{\Phi(\vtheta)}^\varphi\right)
\end{equation}
for all real-valued phase functions $\varphi$ satisfying {\bf(a$_1$)} and {\bf(a$_2$)}.
The equality \eqref{kashiwara4} allows one to define the above \v Cech cohomology class in an invariant manner, without using the phase functions.
\end{remark}

There are many other definitions and generalizations of the concept of Maslov index (see, for instance \cite{CLM,D,H2,RS}). We won't elaborate further on this topic, since they are not needed for our purposes.

\section{Fourier integral operators}\label{s:fio}

Further on, we are always assuming that the symbols and amplitudes belong to H\"ormander's classes $S^m_{\mathrm{phg}}$ (see, for instance, \cite[Chapter 18]{H2}). Recall that the conic support $\,\conesupp p\,$ of a function $\,p\in S^m_{\mathrm{phg}}\,$ is defined as the closure of the union $\bigcup_j\supp p_j$, where $p_j$ are the positively homogeneous functions appearing in the asymptotic expansion $p\sim\sum_j{p_j}$.

\subsection{Definition}\label{s:fio-def}
Let $V$ be an operator  in the space of half-densities on $M$ with Schwartz kernel $\VC(x,y)$ (that is, $Vu(x)=\langle\VC(x,\cdot),u(\cdot)\rangle$). The operator $V$
is said to be a FIO of order $m$ associated with $\Phi$ if  $\VC(x,y)$ can be represented modulo a smooth half-density by an oscillatory integral of the form
\begin{equation}\label{fio-def1}
(2\pi)^{-n}\int_{T^*_yM} e^{i\varphi(x;y,\eta)}p(y,\eta)\,\left|\det\varphi_{x\eta}(x;y,\eta)\right|^{1/2}\,
\varsigma(x;y,\eta)\,d\eta\,,
\end{equation}
where $\varphi\in\Ff_\Phi$, $\,p\in S^m_{\mathrm{phg}}$ is an amplitude with $\,\conesupp p\subset\DC(\Phi)\,$, and $\varsigma$ is an arbitrary cut-off function such that
\begin{enumerate}
\item[{\bf(a$_4$)}]
$\varsigma$ is positively homogeneous of degree 0 for large $\eta$, 
\item[{\bf(a$_5$)}]
$\varsigma$ is  identically equal to 1 in a small neighbourhood of the set $\{x=x^\star(y,\eta)\}$ and vanishes outside another small neighbourhood of the set $\{x=x^\star(y,\eta)\}$,
\item[{\bf(a$_6$)}]
on $\supp\varsigma$, the equation $\varphi_\eta(x;y,\eta)=0$ has the only solution $x=x^\star$ and $\det\varphi_{x\eta}(x;y,\eta)\ne0$.
\end{enumerate}

Note that 
\begin{itemize}
\item 
in view of {\bf(a$_3$)}, the conditions {\bf(a$_6$)} are fulfilled whenever $\supp\varsigma$ is sufficiently small,
\item
the right hand side of \eqref{fio-def1} behaves as a half-density with respect to $x$ and $y$ and, consequently, the corresponding operator acts in the space of half-densities.
\end{itemize}

\begin{remark}\label{r:fio-def1}
The above definition of a FIO was introduced in \cite{LSV} (see also \cite[Chapter 2]{SV}). It is equivalent to the traditional one, which is given in terms of local real-valued phase functions parametrizing the Lagrangian manifold 
\begin{equation}\label{lagr-manifold}
\{(y,\eta;x,\xi)\in\DC(\Phi)\times T^*M \mid (x,\xi)=(x^\star(y,\eta),\xi^\star(y,\eta))\}
\end{equation}
(see, for example, \cite{H1} or \cite{Tr}).

\begin{remark}\label{r:fio-def2}
One can define a FIO using \eqref{fio-def1} with an amplitude $\tilde p(x;y,\eta)\in S^m_{\mathrm{phg}}$ depending on $x\in M$ instead of  $p(y,\eta)$. 
These two definitions are equivalent. Indeed, since $(x-x^\star)e^{i\varphi}=B\,\nabla_\eta\,e^{i\varphi}$ with some smooth matrix-function $B$, one can always
remove the dependence on $x$ by expanding $\tilde p$ into Taylor's series at the point $x=x^\star$, replacing $(x-x^\star)e^{i\varphi}$ with $B\,\nabla_\eta\,e^{i\varphi}$ and integrating by parts. In particular, this procedure shows that \eqref{fio-def1} with an $x$-dependent amplitude $\tilde p(x;y,\eta)$ defines an infinitely smooth half-density whenever $p\equiv0$ in a conic neighbourhood of the set $\{x=x^\star\}$.
\end{remark}
\end{remark}

The following is \cite[Theorem 1.8]{LSV}.

\begin{lemma}\label{l:fio-def}
The Schwartz kernel of a FIO associated with $\Phi$ can be represented modulo a smooth half-density by an integral of the form \eqref{fio-def1} with any phase function $\varphi\in\Ff_\Phi$ and any cut-off function $\varsigma$ satisfying {\bf(a$_4$)}--{\bf(a$_6$)}.
\end{lemma}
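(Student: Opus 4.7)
The plan is to establish the statement in two stages: first I would prove that the FIO class does not depend on the cut-off function $\varsigma$ for a fixed $\varphi$, then I would use a continuous deformation argument to show independence from the choice of $\varphi\in\Ff_\Phi$. The main tools will be non-stationary phase integration by parts, the connectedness of $\Ff_\Phi$ asserted in Remark \ref{r:connected}, and the possibility of using $x$-dependent amplitudes granted by Remark \ref{r:fio-def2}.

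\emph{Independence of the cut-off.} If $\varsigma_1,\varsigma_2$ both satisfy (a$_4$)--(a$_6$) for the same $\varphi$, their difference vanishes in a neighbourhood of $\{x=x^\star(y,\eta)\}$ by (a$_5$). On $\supp(\varsigma_1-\varsigma_2)$, condition (a$_6$) gives $\varphi_\eta\ne 0$, so the first-order operator $L=(i|\varphi_\eta|^2)^{-1}\,\overline{\varphi_\eta}\cdot\nabla_\eta$ satisfies $Le^{i\varphi}=e^{i\varphi}$. Since $|\varphi_\eta|^2$ is positively homogeneous of degree $2$ in $\eta$ and $\im\varphi\ge 0$ implies $|e^{i\varphi}|\le 1$, applying the transpose of $L$ repeatedly inside the integral yields arbitrary decay in $\eta$, so the difference of the two oscillatory integrals is a smooth half-density.

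\emph{Independence of the phase.} Given $\varphi_0,\varphi_1\in\Ff_\Phi$, Remark \ref{r:connected} and compactness reduce the argument to the case where $\varphi_t:=(1-t)\varphi_0+t\varphi_1$ lies in $\Ff_\Phi$ for all $t\in[0,1]$. I would look for a family $q_t\in S^m_{\mathrm{phg}}$ with $q_0=p_0$ such that
$$
I_t:=(2\pi)^{-n}\!\int_{T^*_yM} e^{i\varphi_t}\,q_t\,|\det(\varphi_t)_{x\eta}|^{1/2}\,\varsigma\,d\eta
$$
is independent of $t$ modulo a smooth half-density, and then set $p_1:=q_1$. Differentiating $I_t$ in $t$ produces a leading term proportional to $i\psi\,q_t$ with $\psi:=\varphi_1-\varphi_0$; by (a$_2$), $\psi$ vanishes to second order at $x=x^\star$, so $\psi=(x-x^\star)^T G(x;y,\eta)(x-x^\star)$ for some smooth $G$. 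The same conditions yield $\varphi_\eta=M_t(x;y,\eta)(x-x^\star)$ with $M_t$ invertible near $x^\star$ by (a$_3$), hence $(x-x^\star)e^{i\varphi_t}=-iM_t^{-1}\nabla_\eta e^{i\varphi_t}$ on $\supp\varsigma$ (shrinking $\supp\varsigma$ if necessary, which is justified by the previous step). Integrating by parts twice in $\eta$ converts $i\psi\,q_t$ into an operator of order $-1$ acting on $q_t$, plus lower-order remainders, and the resulting transport equation for $\partial_t q_t$ can be solved asymptotically in decreasing homogeneity via a Borel-sum construction. Intermediate amplitudes may acquire $x$-dependence, which is permissible by Remark \ref{r:fio-def2}.

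\emph{Main obstacle.} The most delicate point is the symbolic bookkeeping in solving the transport equation: I would need to verify that the amplitudes stay in $S^m_{\mathrm{phg}}$ with conic support inside $\DC(\Phi)$, that the remainders really decrease in order with each integration by parts, and that $|\varphi_\eta|^{-2}$ remains smooth on the shrunken support of $\varsigma$ for the whole family $\varphi_t$. In the complex-phase case one must also check that $\varphi_\eta$ stays non-zero off $\{x=x^\star\}$ uniformly in $t$, so that the integration-by-parts step from the first stage applies to every $\varphi_t$.
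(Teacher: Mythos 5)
The paper does not prove this lemma itself but cites it as \cite[Theorem~1.8]{LSV}; the natural route, signalled by Remark~\ref{r:connected} (``This allows one to prove various results by continuously transforming one phase function into another'') and by Remark~\ref{r:fio-def2}, is exactly the two-stage argument you outline: non-stationary phase for the cut-off, then a homotopy/transport-equation argument for the phase. So the plan is sound and is essentially the standard one.

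Two points deserve correction. First, $\varphi$ is positively homogeneous of degree~$1$ in~$\eta$, so $\varphi_\eta$ and $|\varphi_\eta|^2$ are homogeneous of degree~$0$, not~$2$; this does not hurt the argument, since each application of $L^t$ still lowers the amplitude order by one through the $\nabla_\eta$, but the degree count as written is wrong. Second, and more substantively, the claim that two integrations by parts convert $i\psi q_t$ into ``an operator of order $-1$ acting on $q_t$'' is not correct. When one converts $(x_1-x^\star_1)(x_2-x^\star_2)\,e^{i\varphi_t}$ into $\eta$-derivatives, the first integration by parts produces a commutator term $B(x^\star;y,\eta)\,x^\star_{\eta}\,e^{i\varphi_t}$, where $x^\star_\eta$ has degree~$-1$ and $c_\alpha q_t$ (from the Taylor coefficients of $\psi$) has degree~$m+1$, so the resulting $x$-free contribution has degree~$m$, i.e.\ order~$0$ relative to $q_t$. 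This is not a harmless bookkeeping slip: if the operator genuinely were of order~$-1$, the leading homogeneous term of $q_t$ would be preserved along the deformation, whereas \eqref{singular-symbol} together with \eqref{Theta-real-1} shows that the leading amplitude $p_m=i^{\Theta^\sr_\varphi}s_V$ does depend on $\varphi$ (differing by powers of~$i$ in the real case). It is precisely this order-$0$ commutator contribution, together with the $\partial_t\log|\det(\varphi_t)_{x\eta}|^{1/2}$ term, that produces the Maslov phase factor at leading order. The transport equation $\dot q_t+L_tq_t=0$ with $L_t$ of order~$0$ is of course still solvable term by term in decreasing homogeneity followed by Borel summation, so the overall scheme survives, but you should correct this step and verify that the order-$0$ part of $L_t$ reproduces the expected jump in $\Theta^\sr_\varphi$.
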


One can find all homogeneous terms in the expansion of the amplitude $p(y,\eta)$ by analysing asymptotic behaviour of the Fourier transforms of localizations of the distribution \eqref{fio-def1}. This implies that $p(y,\eta)$ is determined modulo a rapidly decreasing function by the FIO and the phase function $\varphi$. It is not difficult to show that the conic support $\conesupp p$ does not depend on the choice of $\varphi$ and is determined only by the FIO $V$ itself (see, for instance,  \cite[Section 2.7.4]{SV}). We shall denote it by $\conesupp V$.

\subsection{Singular principal symbol}\label{s:fio-symbol1}

Let $\VC(x,y)$ be given by \eqref{fio-def1} with $p\in S^m_{\mathrm{phg}}$, and let $p_m$ be the leading homogeneous term of $p$. Let us fix a point $(y_0,\eta_0)\in\DC(\Phi)$, denote $\Phi(y_0,\eta_0)=(x_0,\xi_0)$ and choose arbitrary local coordinates in a neighbourhood of $x_0$ such that $\det\xi^\star_\eta(y_0,\eta_0)\ne0$. If $\rho$ is a $C^\infty$-function supported in a sufficiently small neighbourhood of $x_0$ then, applying the stationary phase formula, we see that
\begin{multline}\label{asymp-1}
\int e^{-i\lambda x\cdot\xi_0}\rho(x)\,\VC(x,y_0)\,dx\\
=\ c\,\rho(x_0)\,i^{\,-\Theta_\varphi^\sr(y_0,\eta_0)}\,p_m(y_0,\eta_0)\,\lambda^m\,+\,O(\lambda^{m-1})
\end{multline}
as $\lambda\to +\infty$, where $\Theta_\varphi^\sr$ is the function defined by \eqref{Theta-s} and
$$
c\ =\ e^{-i\lambda x_0\cdot\xi_0}\left.\left(i^{\,\kappa_-\left((x^\star_\eta)^T\cdot\xi^\star_\eta\right)}\,
\;|\det\xi^\star_\eta|^{-1/2}\right)\right|_{(y,\eta)=(y_0,\eta_0)}
$$
(see \cite[Lemma 1.18]{LSV} for details). Since $c$ depends only on $\Phi$ and the choice of local coordinates, it follows that the function
\begin{equation}\label{singular-symbol}
s_V(y,\eta)\ =\ i^{\,-\Theta_\varphi^\sr(y,\eta)}\,p_m(y,\eta)
\end{equation}
on $\DC(\Phi)$ is uniquely defined by the operator $V$. We shall call it the {\it singular principal symbol} of the FIO $V$. In view of Remark \ref{r:c1}(ii), the function $s_V$ is continuous along any path on which $\rank x^\star_\eta$ is constant, but it may have jumps at the points where $x^\star_\eta$ changes its rank.

\begin{remark}\label{r-singular-symbol}
The singular symbol $s_V$ determines the FIO $V$ modulo lower terms and is uniquely determined by $V$. However, in the general case, it is a discontinuous function which makes it inconvenient to deal with in applications.
\end{remark}

\subsection{Classical principal symbol}\label{s:fio-symbol2}

Let $\GC(\Phi)$ be the smooth principal $\Z$-bundle over $\DC(\Phi)$ whose fibre at a point $(y,\eta)\in\DC(\Phi)$ is the additive group mapping different branches of $\Theta_\varphi^\rr(y,\eta)$ into each other. In view of Remark \ref{r:connected}, $\GC(\Phi)$ does not depend on the choice of $\varphi\in\Ff$. It is usually called the {\it Maslov principal bundle} (another definition and further discussions can be found, for instance, in \cite[Section 12.6]{RS}). 

Factoring out $4\Z$, we obtain a principal $\Z_4$-bundle $\GC_4(\Phi)$ over $\DC(\Phi)$, which can be thought of as a smooth 4-fold covering of $\DC(\Phi)$ (the group structure is not important for our purposes). It is trivial (that is, consists of four disconnected components) if and only if 
\begin{enumerate}
\item[\bf(C$_2$)]
the Maslov cohomology class of $\DC(\Phi)$ is trivial modulo $4\Z$.
\end{enumerate}
Note that {\bf(C$_2$)} holds true whenever $\DC(\Phi)$ is simply connected.

It is convenient to consider $\Theta_\varphi^\rr$ and $\Theta_\Phi$ as single-valued functions on $\GC_4(\Phi)$ and to define the function $q=i^{\,-\Theta_\varphi^\rr}p$ on $\GC_4(\Phi)$ to be the {\it full symbol} of the FIO $V$. Under this definition, 
$$
q(y,\eta)\,\left(\mathrm{det}^2\,\varphi_{x\eta}(x;y,\eta)\right)^{1/4}\ =\ p(y,\eta)\,\left|\det\varphi_{x\eta}(x;y,\eta)\right|^{1/2}
$$
is a single-valued function on $M\times\DC(\Phi)$ and \eqref{fio-def1} can be rewritten in the form
\begin{equation}\label{fio-def2}
(2\pi)^{-n}\int_{T^*_yM} e^{i\varphi(x;y,\eta)}q(y,\eta)\,\left(\mathrm{det}^2\,\varphi_{x\eta}(x;y,\eta)\right)^{1/4}\,
\varsigma(x;y,\eta)\,d\eta\,.
\end{equation}
The leading homogeneous term $i^{\,-\Theta_\varphi^\rr}p_m$ of the amplitude $q$ is said to be the {\it classical principal symbol} of the FIO $V$. We shall denote it by $\sigma_V$. Clearly, 
\begin{equation}\label{smooth-symbol1}
\sigma_V \ =\  i^{-\Theta_\Phi}\,s_V\,.
\end{equation}
Proposition \ref{p:Theta} implies that $\sigma_V$ is uniquely determined by the operator $V$ as a function on $\GC_4(\Phi)$.

\begin{remark}\label{r:smooth-symbol}
The classical principal symbol $\sigma_V$ can also be interpreted as a multi-valued function on $\DC(\Phi)$ or a section of the linear bundle associated with the Maslov cohomology class (see, for instance, \cite{H1} and \cite{Tr}). However, under any approach, its precise value at a given point is not uniquely defined and depends on the choice of branch of the function $\Theta_\Phi$. 
\end{remark}

Given a point $\vtheta\in\DC(\Phi)$, one can enumerate the branches of the function $\Theta_\Phi$ (or, equivalently, the fibres of $\GC_4(\Phi)\,$) over any 
connected and simply connected neighbourhood of $\vtheta$ by the value of $\Theta_\Phi$ at the point $\vtheta$. Let us denote by  $\Theta_{\Phi,\vtheta}$ the branch of $\Theta_\Phi$ which is equal to 0 at $\vtheta$ and define
\begin{equation}\label{smooth-symbol2}
\sigma_{V,\vtheta}(y,\eta) \ =\  i^{-\Theta_{\Phi,\vtheta}(y,\eta)}\,\,s_V(y,\eta)\,.
\end{equation}
In other words, $\sigma_{V,\vtheta}$ is the branch of $\sigma_V$ such that
\begin{equation}\label{smooth-symbol3}
\sigma_{V,\vtheta}(\vtheta) \ =\  \,s_V(\vtheta)\,.
\end{equation}

\begin{example}\label{e:pdo1}
If $\Phi$ is the identity transformation then $\varphi_{x\eta}(x^\star;y,\eta)\equiv I$ for all $\varphi\in\Ff_\Phi$ and the corresponding FIO $\,V\,$ is a $\PsDO$ (see, for example, \cite[Theorem 19.1]{Sh}). If we put  $\Theta_\varphi^\rr\equiv0$ then, for all $\vtheta\in T^*M$, $\sigma_{V,\vtheta}$ coincides with the principal symbol $\,\sigma_V\,$ of the $\PsDO$ $\,V\,$ in the sense of the theory of pseudodifferential operators.
\end{example}

\begin{example}\label{e:pdo2}
More generally, if $\Phi$ is homotopic to the identity transformation then $\Phi$ satisfies {\bf(C$_2$)} and one can single out a branch of $\Theta_\Phi$ by assigning to $\Theta_\Phi(\vtheta)$ the value $m(\vtheta)$ obtained from 0 by a continuous transformation of a phase function corresponding to $\Phi=I$ into $\varphi$. This idea works, in particular, if $\Phi$ is the shift along geodesics or billiard trajectories (see, for instance, \cite[Section 2.6.3]{SV} or \cite{JSS}). Under this definition,   $\sigma_V(y,\eta)=i^{\,-m(\vtheta)}\sigma_{V,\vtheta}(y,\eta)$.
\end{example}

\section{Symbolic calculus for FIOs}\label{s:calculus}

\subsection{Main theorem and corollaries}\label{s:calculus-results}

The parts (1) of the following statements are well known and can be found in textbooks on FIOs. The only novelty is the explicit formulae for the principal symbols in the parts (2) and (3).  

\begin{theorem}\label{t:composition}
Let $V_1$, $V_2$ be FIOs of order $m_1$, $m_2$ associated with canonical transformations $\Phi_1$, $\Phi_2$. If $\,\Phi_1(T^*M)\subset\DC(\Phi_2^{-1})$ then
\begin{enumerate}
\item[(1)]
the composition $V_2^*V_1$ is a FIO of order $m_1+m_2$ associated with the canonical transformation $\Phi=\Phi_2^{-1}\circ\Phi_1$ such that\\
$\,\conesupp(V_2^*V_1)\subset\left(\conesupp V_1\,\bigcap\,\Phi^{-1}(\conesupp V_2)\right)$,
\item[(2)]
$\,s_{V_2^*V_1}(\vtheta)=i^{\,k(\vtheta)}\,s_{V_1}(\vtheta)\,\overline{s_{V_2}(\Phi(\vtheta))}\,$ for all $\vtheta\in\DC(\Phi)$, 
\item[(3)]
$\sigma_{V_2^*V_1,\vtheta}(y,\eta)=i^{\,k(\vtheta)}\,\sigma_{V_1,\vtheta}(y,\eta)\,\overline{\sigma_{V_2,\Phi(\vtheta)}(\Phi(y,\eta))}\,$ for all $\vtheta\in\DC(\Phi)$ and $(y,\eta)\in\DC(\Phi)$,
\end{enumerate}
where $\,k(\vtheta)=\varkappa\left(\dr\Phi_1(V_\vtheta),\dr\Phi_2(V_{\Phi(\vtheta)})\right)\,$
is the modified Kashiwara index defined by \eqref{kashiwara0}.
\end{theorem}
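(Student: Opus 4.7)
The plan is to follow the strategy indicated after Example \ref{e:phi-1}: establish the formulae first at an arbitrary fixed point by a local stationary-phase computation using the simplest phase functions, then appeal to the intrinsic nature of both sides of (2) to conclude globally, and finally deduce (3) from (2). Fix an arbitrary $\vtheta=(y_0,\eta_0)\in\DC(\Phi)$ and set $\mu=\Phi_1(\vtheta)=\Phi_2(\Phi(\vtheta))$. Using the existence of a Lagrangian transversal to any finite collection of Lagrangians (Subsection \ref{s:def-1}) together with Lemma \ref{l:horizontal}, I would choose local coordinates $x$ near the base point of $\mu$ in which the horizontal subspace $H_\mu$ is simultaneously transversal to $L_1:=\dr\Phi_1(V_\vtheta)$ and $L_2:=\dr\Phi_2(V_{\Phi(\vtheta)})$; in these coordinates $\xi^\star_{1,\eta}$ and $\xi^\star_{2,\zeta}$ are invertible, so by Example \ref{e:phi-1} the functions $\varphi_1(x;y,\eta)=(x-x^\star_1)\cdot\xi^\star_1\in\Ff_{\Phi_1}$ and $\varphi_2(x;z,\zeta)=(x-x^\star_2)\cdot\xi^\star_2\in\Ff_{\Phi_2}$ are admissible phase functions on a conic neighbourhood, and Lemma \ref{l:fio-def} lets me represent $V_1,V_2$ by oscillatory integrals of the form \eqref{fio-def1} with these choices.

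Writing the kernel of $V_2^*V_1$ as the $x$-integral of $\overline{\VC_2(x,z)}\,\VC_1(x,y)$ produces a triple oscillatory integral over $(x,\eta,\zeta)$ with phase $\varphi_1(x;y,\eta)-\varphi_2(x;z,\zeta)$. I would apply the stationary-phase method in $(x,\zeta)$ with $(y,\eta,z)$ as parameters. Using \eqref{preserve-1}, the critical equations reduce to $x=x^\star_2(z,\zeta)$ and $\xi^\star_1(y,\eta)=\xi^\star_2(z,\zeta)$, which have a unique solution depending smoothly on $(y,\eta,z)$; the critical value defines a phase function $\tilde\varphi(z;y,\eta)\in\Ff_\Phi$ for the composition, and the Hessian is nondegenerate with $|\det\mathrm{Hess}|=(\det\xi^\star_{2,\zeta})^2$. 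This yields part (1) on the neighbourhood and gives a concrete formula for the leading amplitude of $V_2^*V_1$.

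The crux is identifying the phase factor $i^{k(\vtheta)}$. Collecting from the stationary phase the moduli $|\det\xi^\star_{1,\eta}|^{1/2}\cdot|\det\xi^\star_{2,\zeta}|^{1/2}\cdot|\det\mathrm{Hess}|^{-1/2}\cdot|\det\tilde\varphi_{x\eta}|^{1/2}$, all absolute values cancel cleanly, and (2) reduces to a single signature identity. Using \eqref{singular-symbol} together with the parametrisations $L_j=\{A_j\vbf\cdot\nabla_x+\vbf\cdot\nabla_\xi\}$ with $A_j=x^\star_{j,\cdot}(\xi^\star_{j,\cdot})^{-1}$, the simplification \eqref{Theta-real-1} for real phases, and a Sylvester congruence that rewrites $\kappa_+(-(x^\star_{j,\cdot})^T\xi^\star_{j,\cdot})$ as $\kappa_-(A_j)$, the claim reduces to \eqref{kashiwara3}, namely $\varkappa(L_1,L_2)=\kappa_-(A_2)-\kappa_-(A_1)+\kappa_-(A_1-A_2)$. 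This signature bookkeeping is the main technical obstacle. The degenerate case in which some $x^\star_{j,\cdot}$ has non-maximal rank is handled by continuously deforming $\varphi_j$ within $\Ff_{\Phi_j}$ to a phase with strictly positive-definite imaginary Hessian (Lemma \ref{l:phase} and Remark \ref{r:connected}), performing the computation via Remark \ref{r:c1}, and passing to the limit using the continuity of $\Theta_\varphi^\sr$.

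Once (2) is established at $\vtheta$, it holds on all of $\DC(\Phi)$ since $\vtheta$ was arbitrary and both sides are intrinsically defined. For part (3), substituting \eqref{smooth-symbol2} into the right-hand side and applying (2) reduces the claim to the branch identity $\Theta_{\Phi,\vtheta}(y,\eta)-\Theta_{\Phi_1,\vtheta}(y,\eta)+\Theta_{\Phi_2,\Phi(\vtheta)}(\Phi(y,\eta))\equiv 0$ on a simply connected neighbourhood of $\vtheta$. The left-hand side vanishes at $\vtheta$ by the normalisation \eqref{smooth-symbol3}, is integer-valued by Proposition \ref{p:Theta}, and is continuous along paths of constant $\rank x^\star_\eta$; by integrality and continuity it is identically zero on the connected component containing $\vtheta$, which completes the proof.
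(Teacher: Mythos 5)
Your proof of parts (1) and (2) is essentially the paper's: you split $V_1$ into pieces with small conic support, choose coordinates on $M$ near the base point of $\Phi_1(\vtheta)=\Phi_2(\Phi(\vtheta))$ so that the horizontal subspace there is transversal to $\dr\Phi_1(V_\vtheta)$ and $\dr\Phi_2(V_{\Phi(\vtheta)})$, take both phases of the simple form $(z-z^{(j)})\cdot\zeta^{(j)}$ from Example \ref{e:phi-1}, apply stationary phase in the interior variables with the dual variable from $V_1$ as the large parameter, and then bookkeep the signatures using \eqref{Theta-real-1}, Sylvester congruences, \eqref{phi0-example}, \eqref{phi2-eta-eta} and \eqref{kashiwara3}. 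That is exactly the paper's Steps 1--5, modulo a swap of letters $x\leftrightarrow z$. (Your sentence about deforming $\varphi_j$ to complex phases with positive-definite $(\im\varphi)_{xx}$ is not actually needed: once the transversality of the chosen horizontal subspace is in place, $\det\varphi^{(j)}_{x\eta}$ never degenerates and all the real-phase computations go through directly.)

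The deduction of part (3) has a genuine gap, and this is the one place where your argument departs from the paper's. Substituting \eqref{smooth-symbol2} into both sides of (3) and then using part (2) \emph{at the variable point} $(y,\eta)$ does \emph{not} give the branch identity you state; it gives
\begin{equation*}
\Theta_{\Phi,\vtheta}(y,\eta)-\Theta_{\Phi_1,\vtheta}(y,\eta)+\Theta_{\Phi_2,\Phi(\vtheta)}(\Phi(y,\eta))\ \equiv\ k(y,\eta)-k(\vtheta)\pmod 4\,,
\end{equation*}
because the Kashiwara index $k$ in (2) is evaluated at the running point $(y,\eta)$, not at the fixed base point $\vtheta$. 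You have dropped the $k(y,\eta)-k(\vtheta)$ term, and since $k$ genuinely varies (it changes whenever $\dr\Phi_1(V_\cdot)$, $\dr\Phi_2(V_{\Phi(\cdot)})$ change their mutual intersection or their intersections with the vertical subspace), the identity $\Theta_\Phi-\Theta_{\Phi_1}+\Theta_{\Phi_2}\circ\Phi\equiv0$ is false. Moreover, even for the corrected identity your proposed justification does not work: each of $\Theta_{\Phi,\vtheta}$, $\Theta_{\Phi_1,\vtheta}$, $\Theta_{\Phi_2,\Phi(\vtheta)}\circ\Phi$ and $k$ is continuous only along paths of constant rank of a \emph{different} matrix ($x^\star_\eta$, $z^{(1)}_\eta$, $z^{(2)}_\xi$, and $A_1-A_2$ respectively), so ``integer-valued and continuous along constant-rank paths'' does not force the sum to be locally constant. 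The correct route, and the one the paper takes in Step 6, does not try to verify the branch identity directly: one observes that $\sigma_{V_2^*V_1,\vtheta}$, $\sigma_{V_1,\vtheta_1}$ and $\sigma_{V_2,\vtheta_2}$ are all \emph{smooth} single-valued functions by construction of the classical principal symbol, that part (2) forces $\sigma_{V_2^*V_1,\vtheta}(y,\eta)=i^{\,m(y,\eta)}\sigma_{V_1,\vtheta_1}(y,\eta)\overline{\sigma_{V_2,\vtheta_2}(\Phi(y,\eta))}$ with an integer-valued $m$, and that smoothness of the three factors (which can be chosen nonvanishing) forces $i^m$, hence $m\bmod 4$, to be locally constant; evaluating at $(y,\eta)=\vtheta$ with $\vtheta_1=\vtheta$, $\vtheta_2=\Phi(\vtheta)$ then gives $m\equiv k(\vtheta)$.
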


The proof of Theorem \ref{t:composition} is given in Subsection \ref{s:calculus-proofs}. 

Taking $V_1=I$ and noting that $\,\varkappa\left(V_\vtheta,\dr\Phi_2(V_{\Phi(\vtheta)})\right)=0\,$, we obtain

\begin{corollary}\label{c:adjoint}
Let $V$ be a FIO of order $m$ associated with a canonical transformation $\Phi$. Then the adjoint operator $V^*$ is a FIO of order $m$ associated with the inverse canonical transformation $\Phi^{-1}$ such that 
\begin{enumerate}
\item[(1)]
$\,\conesupp V^*\subset\Phi(\conesupp V)$,
\item[(2)]
$\,s_{V*}(\vtheta)=\overline{s_V(\Phi^{-1}(\vtheta))}\,$,
\item[(3)]
$\,\sigma_{V^*,\vtheta}(y,\eta)=\overline{\sigma_{V,\Phi^{-1}(\vtheta)}(\Phi^{-1}(y,\eta))}\,$.
\end{enumerate}
\end{corollary}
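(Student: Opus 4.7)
My plan is to deduce the corollary directly from Theorem~\ref{t:composition} by specializing $V_1 = I$ (an FIO of order $0$ associated with the identity canonical transformation, with singular symbol identically~$1$) and $V_2 = V$. With this choice the composition $V_2^*V_1$ equals $V^*$ and the composite canonical transformation $\Phi_2^{-1}\circ\Phi_1$ collapses to $\Phi^{-1}$, so each of parts (1), (2), (3) in the theorem should specialize to the corresponding part of the corollary.

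Part~(1) is immediate from Theorem~\ref{t:composition}(1): $\conesupp V^* \subset \conesupp I \cap \Phi(\conesupp V) = \Phi(\conesupp V)$, because $\conesupp I = T^*M\setminus 0$. For part~(2), Theorem~\ref{t:composition}(2) gives $s_{V^*}(\vtheta) = i^{k(\vtheta)}\overline{s_V(\Phi^{-1}(\vtheta))}$ with $k(\vtheta) = \varkappa(V_\vtheta, \dr\Phi(V_{\Phi^{-1}(\vtheta)}))$. To recover the stated formula I need $i^{k(\vtheta)}=1$, i.e.\ that $\varkappa(V_\vtheta, \cdot)$ evaluated on any Lagrangian of the form $\dr\Phi(V_{\Phi^{-1}(\vtheta)})$ vanishes modulo~$4$. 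My route is to select a horizontal reference $H_\vtheta$ transversal both to $V_\vtheta$ and to $\dr\Phi(V_{\Phi^{-1}(\vtheta)})$ (which exists by the general transversality fact recalled in Subsection~\ref{s:def-1}), then invoke formula~\eqref{kashiwara3}: the symmetric matrix $A_1$ parametrizing $V_\vtheta$ in these coordinates is zero, so the expression simplifies to a pairing involving only the matrix $A_2$ associated with $\dr\Phi(V_{\Phi^{-1}(\vtheta)})$. The radial homogeneity of $\Phi$ ensures that $V_\vtheta\cap\dr\Phi(V_{\Phi^{-1}(\vtheta)})$ always contains the Euler direction, and this symmetry should be what forces the residual contribution to collapse.

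Part~(3) is then a translation exercise: applying \eqref{smooth-symbol2}--\eqref{smooth-symbol3} to both $V$ and $V^*$ reduces the statement to the branch-compatibility identity
\[
\Theta_{\Phi^{-1},\vtheta}(y,\eta)\ \equiv\ -\,\Theta_{\Phi,\Phi^{-1}(\vtheta)}\bigl(\Phi^{-1}(y,\eta)\bigr)\pmod{4\Z}.
\]
I would establish this by expressing both sides through a single $\varphi\in\Ff_\Phi$ together with the natural adjoint candidate $\tilde\varphi(x;y,\eta) = -\overline{\varphi(y;x,\eta)}$ (localized so that $\tilde\varphi\in\Ff_{\Phi^{-1}}$), and tracking how the rotational piece~\eqref{Theta-r} and signature piece~\eqref{Theta-s} transform under this duality. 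Concretely, $(\tilde\varphi)_{x\eta}$ and $(\tilde\varphi)_{\eta\eta}$ are controlled by $\varphi_{y\eta}$ and $\varphi_{\eta\eta}\circ\Phi^{-1}$, and one reads off the sign reversal directly from the definitions.

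The main obstacle is the Kashiwara-index simplification in part~(2): everything downstream is essentially bookkeeping, but the fact that $\varkappa(V_\vtheta,\dr\Phi(V_{\Phi^{-1}(\vtheta)}))$ must reduce to zero modulo~$4$ is the geometric assertion on which the whole proof pivots, and it is the step I would need to verify most carefully --- in particular checking that the contribution measured by the rank of $x^\star_\eta$ is either genuinely zero in this configuration or is the precise factor already absorbed into the singular symbol through the $-(\rank x^\star_\eta)/2$ term in~\eqref{Theta-s}.
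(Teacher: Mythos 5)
Your overall plan---take $V_1=I$, $V_2=V$ in Theorem~\ref{t:composition} and observe that $V_2^*V_1=V^*$---is exactly the paper's route; the paper's own proof is one sentence, ``taking $V_1=I$ and noting that $\varkappa(V_\vtheta,\dr\Phi_2(V_{\Phi(\vtheta)}))=0$''. Part~(1) is as you say, and for part~(3) you are doing more work than necessary: once $k(\vtheta)=0$, part~(3) falls out of Theorem~\ref{t:composition}(3) directly (since $\sigma_{I,\vtheta}\equiv1$), and the branch-compatibility identity you propose to prove via the adjoint phase $-\overline{\varphi(y;x,\eta)}$ is already built into Step~6 of the theorem's proof. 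There is no need to re-derive it.

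The real problem is the Kashiwara-index simplification in part~(2), and you are right to be uneasy about it. Your claim that with $A_1=0$ the ``expression simplifies to a pairing involving only $A_2$'' which then ``collapses'' is not correct as stated. Putting $A_1=0$ into \eqref{kashiwara3} gives
\[
\varkappa\bigl(V_\vtheta,\,L_2\bigr)\;=\;\kappa_-(A_2)-\kappa_-(0)+\kappa_-(-A_2)\;=\;\kappa_-(A_2)+\kappa_+(A_2)\;=\;\rank A_2\,,
\]
and $\rank A_2=n-\dim\bigl(L_2\cap V_\vtheta\bigr)=\rank x^\star_\eta$ at $\Phi^{-1}(\vtheta)$. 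The Euler direction lying in $\dr\Phi(V_{\Phi^{-1}(\vtheta)})\cap V_\vtheta$ only gives $\rank A_2\le n-1$; it does not make the rank vanish, and nothing about the $-(\rank x^\star_\eta)/2$ shift in \eqref{Theta-s} absorbs it, since the singular symbol $s_V$ is already defined by \eqref{singular-symbol} and that shift has been used once and for all. Note also the asymmetry: the same computation with the arguments swapped gives $\varkappa(L_2,V_\vtheta)=\kappa_-(0)-\kappa_-(A_2)+\kappa_-(A_2)=0$, which is what the paper asserts but for the \emph{wrong ordering} of arguments relative to the $k(\vtheta)$ appearing in Theorem~\ref{t:composition}. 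So the gap you flagged is genuine: either $k(\vtheta)$ in the theorem should read $\varkappa\bigl(\dr\Phi_2(V_{\Phi(\vtheta)}),\dr\Phi_1(V_\vtheta)\bigr)$ (which would make the corollary immediate and would require revisiting the bookkeeping in Step~5), or the corollary's formula picks up an extra factor $i^{\,\rank x^\star_\eta(\Phi^{-1}(\vtheta))}$. Your proposal identifies the crux correctly but does not resolve it; simply invoking \eqref{kashiwara3} with $A_1=0$, as you plan, produces $\rank A_2$, not $0$.
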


Theorem \ref{t:composition} and Corollary \ref{c:adjoint} immediately imply

\begin{corollary}\label{c:composition}
Let $V_1$, $V_2$ be as in Theorem {\rm\ref{t:composition}}. If $\,\Phi_1(T^*M)\subset\DC(\Phi_2)$ then 
\begin{enumerate}
\item[(1)]
the composition $V_2V_1$ is a FIO of order $m_1+m_2$ associated with the canonical transformation $\Phi=\Phi_2\circ\Phi_1$ such that\\
$\,\conesupp(V_2V_1)\subset\left(\conesupp V_1\,\bigcap\,\Phi_1^{-1}(\conesupp V_2)\right)$,
\item[(2)]
$\,s_{V_2V_1}(\vtheta)=i^{\,k(\vtheta)}\,s_{V_1}(\vtheta)\,s_{V_2}(\Phi_1(\vtheta))\,$, 
\item[(3)]
$\,\sigma_{V_2V_1,\vtheta}(y,\eta)=i^{\,k(\vtheta)}\,\sigma_{V_1,\vtheta}(y,\eta)\,\sigma_{V_2,\Phi_1(\vtheta)}(\Phi_1(y,\eta))\,$,
\end{enumerate}
where $k(\vtheta)=\varkappa\left(\dr\Phi_1(V_\vtheta),\dr\Phi_2^{-1}(V_{\Phi(\vtheta)})\right)$.
\end{corollary}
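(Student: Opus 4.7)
The plan is to reduce everything to Theorem \ref{t:composition} by writing $V_2V_1 = (V_2^*)^*V_1$ and setting $W := V_2^*$. By Corollary \ref{c:adjoint}, $W$ is a FIO of order $m_2$ associated with $\Phi_2^{-1}$, so the theorem's inclusion hypothesis for the pair $(V_1, W)$ reads $\Phi_1(T^*M) \subset \DC(W^{-1}) = \DC(\Phi_2)$, which is exactly the assumption of the corollary. Applying Theorem \ref{t:composition} to $W^*V_1 = V_2V_1$ then yields a FIO of order $m_1+m_2$ associated with $(\Phi_2^{-1})^{-1}\circ\Phi_1 = \Phi_2\circ\Phi_1 = \Phi$, establishing part (1)'s first claim.

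For the conic support, Theorem \ref{t:composition}(1) gives $\conesupp(V_2V_1) \subset \conesupp V_1 \cap \Phi^{-1}(\conesupp W)$, and Corollary \ref{c:adjoint}(1) applied to $W = V_2^*$ gives $\conesupp W \subset \Phi_2(\conesupp V_2)$. Combining and using $\Phi = \Phi_2\circ\Phi_1$, we obtain
\[
\Phi^{-1}(\conesupp W) \subset \Phi^{-1}(\Phi_2(\conesupp V_2)) = \Phi_1^{-1}(\conesupp V_2),
\]
which finishes (1).

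For part (2), Theorem \ref{t:composition}(2) applied to $W^*V_1$ gives
\[
s_{V_2V_1}(\vtheta) = i^{k_0(\vtheta)}\, s_{V_1}(\vtheta)\, \overline{s_W(\Phi(\vtheta))},
\]
where $k_0(\vtheta) = \varkappa\bigl(\dr\Phi_1(V_\vtheta),\, \dr\Phi_2^{-1}(V_{\Phi(\vtheta)})\bigr)$ since $W$ is associated with $\Phi_2^{-1}$; this is precisely the $k(\vtheta)$ in the statement. Corollary \ref{c:adjoint}(2) gives $s_W(\mu) = \overline{s_{V_2}(\Phi_2^{-1}(\mu))}$, so
\[
\overline{s_W(\Phi(\vtheta))} = s_{V_2}\bigl(\Phi_2^{-1}(\Phi_2\circ\Phi_1)(\vtheta)\bigr) = s_{V_2}(\Phi_1(\vtheta)),
\]
and (2) follows. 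Part (3) is entirely analogous: applying Theorem \ref{t:composition}(3) and then Corollary \ref{c:adjoint}(3) to the basepoint $\Phi(\vtheta)$ of $W$, one checks that the basepoint $\Phi_2^{-1}(\Phi(\vtheta))$ on the $V_2$ side simplifies to $\Phi_1(\vtheta)$, matching the statement.

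There is no genuine obstacle here; the only point requiring a little care is bookkeeping with the basepoints of the multi-valued symbols $\sigma_{V,\vtheta}$ and verifying that the Kashiwara index emerging from Theorem \ref{t:composition} coincides verbatim with the one stated in the corollary (rather than producing an extra summand from the intermediate passage through $V_2^*$). Since Corollary \ref{c:adjoint}(2)--(3) produces no index factor, the $i^{k(\vtheta)}$ contributed by Theorem \ref{t:composition} is the only one, and the substitution $\Phi_2 \rightsquigarrow \Phi_2^{-1}$ inside the Kashiwara argument is built into how we applied the theorem, giving exactly $\varkappa\bigl(\dr\Phi_1(V_\vtheta), \dr\Phi_2^{-1}(V_{\Phi(\vtheta)})\bigr)$.
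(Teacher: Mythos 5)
Your proof is correct and takes the same route the paper intends: the paper states that Theorem \ref{t:composition} and Corollary \ref{c:adjoint} ``immediately imply'' Corollary \ref{c:composition}, which is precisely the reduction you carry out via $V_2V_1=(V_2^*)^*V_1$ with $W=V_2^*$. Your bookkeeping of the basepoints and the Kashiwara index (noting that the substitution $\Phi_2\rightsquigarrow\Phi_2^{-1}$ in the theorem's formula produces exactly the stated $\varkappa\bigl(\dr\Phi_1(V_\vtheta),\dr\Phi_2^{-1}(V_{\Phi(\vtheta)})\bigr)$, and that the adjoint step contributes no extra factor) is accurate.
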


The above results imply the following refined version of Egorov's theorem.

\begin{corollary}\label{c:egorov}
Let $V_k$ be FIOs associated with a canonical transformation $\Phi$. If $A$ is a $\PsDO$ with $\,\conesupp A\subset\Phi_1(T^*M)$ then the composition $B=V_2^*AV_1$ is a $\PsDO$ with the principal symbol
\begin{equation}\label{egorov}
\sigma_B(y,\eta)\ =\ \sigma_{V_1,\vtheta}(y,\eta)\,\sigma_A(\Phi(y,\eta))\,\overline{\sigma_{V_2,\vtheta}(y,\eta)}\,,\qquad\forall\vtheta\in\DC(\Phi)\,.
\end{equation}
\end{corollary}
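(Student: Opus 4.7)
The plan is to view the $\PsDO$ $A$ as a FIO associated with the identity transformation (Example \ref{e:pdo1}) and to factor $B$ as the two-step composition $V_2^*(AV_1)$, applying Corollary \ref{c:composition} and Theorem \ref{t:composition} in turn.

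First I would apply Corollary \ref{c:composition} with $\Phi_1=\Phi$ (the transformation of $V_1$) and $\Phi_2=I$ (the transformation of $A$). Since $I\circ\Phi=\Phi$, the product $AV_1$ is a FIO associated with $\Phi$, and its branch-selected principal symbol reads
\[
\sigma_{AV_1,\vtheta}(y,\eta)\ =\ i^{k_1(\vtheta)}\,\sigma_{V_1,\vtheta}(y,\eta)\,\sigma_{A,\Phi(\vtheta)}(\Phi(y,\eta))
\]
with $k_1(\vtheta)=\varkappa\bigl(\dr\Phi(V_\vtheta),V_{\Phi(\vtheta)}\bigr)$. I would then apply Theorem \ref{t:composition} to $V_2^*(AV_1)$, with both factors associated with $\Phi$; the composed transformation $\Phi^{-1}\circ\Phi=I$ shows that $B$ is a $\PsDO$, and
\[
\sigma_{B,\vtheta}(y,\eta)\ =\ i^{k_2(\vtheta)}\,\sigma_{AV_1,\vtheta}(y,\eta)\,\overline{\sigma_{V_2,\vtheta}(y,\eta)}
\]
with $k_2(\vtheta)=\varkappa\bigl(\dr\Phi(V_\vtheta),\dr\Phi(V_\vtheta)\bigr)$.

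The main obstacle is proving that both Kashiwara indices vanish. For $k_2$ this is immediate: setting $A_1=A_2$ in \eqref{kashiwara3} gives $\varkappa(L,L)=0$. For $k_1$ the formula \eqref{kashiwara3} does not apply, since neither $\dr\Phi(V_\vtheta)$ nor $V_{\Phi(\vtheta)}$ is required to be transversal to the vertical. Instead I would return to the definition: $r(L,V_\vtheta)=0$ for every Lagrangian $L$, so $\varkappa(L,V_\vtheta)=\tfrac12\kappa(L,V_\vtheta)$, and the substitution $\theta\mapsto\theta-\theta_2$ reduces $Q_{L,V_\vtheta,V_\vtheta}$ to the skew pairing $\om(\theta_1,\theta')$ on $L\oplus V_\vtheta$; the matrix of this pairing has block form $\begin{pmatrix}0&B\\B^T&0\end{pmatrix}$ and hence signature zero. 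With both indices gone, combining the two symbol identities and using $\sigma_{A,\Phi(\vtheta)}=\sigma_A$ and $\sigma_{B,\vtheta}=\sigma_B$ from Example \ref{e:pdo1} yields \eqref{egorov}, and the $\vtheta$-independence of its right-hand side drops out as a built-in consistency check.
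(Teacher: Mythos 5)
Your proof follows the paper's route exactly: apply Corollary~\ref{c:composition} to $AV_1$ (with $\Phi_2=I$), then Theorem~\ref{t:composition} to $V_2^*(AV_1)$, and observe that the two Kashiwara indices vanish. One small remark: formula \eqref{kashiwara3} does in fact apply to $k_1$, since it only requires a horizontal $H_\vtheta$ transversal to $L_1$, $L_2$ and $V_\vtheta$ simultaneously (which always exists, even when $L_2=V_{\Phi(\vtheta)}$); taking $A_2=0$ there gives $\varkappa(\dr\Phi(V_\vtheta),V_{\Phi(\vtheta)})=\kappa_-(0)-\kappa_-(A_1)+\kappa_-(A_1)=0$ directly, though your alternative computation from the definition is of course also correct.
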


\begin{proof}
The formula \eqref{egorov} for the principal symbol is obtained by applying Corollary \ref{c:composition} to the composition $AV_1$ and then Theorem \ref{t:composition} to the composition of $V_2^*$ and $AV_1$.
\end{proof}

\subsection{Proof of Theorem \ref{t:composition}}\label{s:calculus-proofs}
Let
\begin{equation*}
\begin{split}
\Phi_1:(y,\eta) & \mapsto\left(z^{(1)}(y,\eta),\zeta^{(1)}(y,\eta)\right),\\
\Phi_2:(x,\xi) &\mapsto\left(z^{(2)}(x,\xi),\zeta^{(2)}(x,\xi)\right),\\
\Phi: (y,\eta)  &\mapsto (x^\star(y,\eta),\xi^\star(y,\eta))\,,
\end{split}
\end{equation*}
so that 
\begin{equation}\label{eqs-1}
z^{(2)}(x^\star,\xi^\star)=z^{(1)}(y,\eta)
\quad\text{and}\quad
\zeta^{(2)}(x^\star,\xi^\star)=\zeta^{(1)}(y,\eta)\,.
\end{equation}

The proof proceeds in several steps.

\subsubsection{Step 1}\label{s:proof-1}

Since we can split $V_1$ into the sum of FIOs with small conic supports, it is sufficient to prove the theorem assuming that $\conesupp V_1$ lies in an arbitrarily small conic neighbourhood $\OC_1$ of a fixed point $(y_0,\eta_0)\in\DC(\Phi_1)$. 

Let us represent the Schwartz kernels $\VC_1(z,y)$ and $\VC_2(z,x)$ of the FIO $V_j$ by oscillatory integrals 
\begin{equation}\label{composition0}
\begin{split}
(2\pi)^{-n}\int_{T^*_yM} e^{i\varphi^{(1)}(z;y,\eta)}p_1(y,\eta)\,
\left|\det\varphi_{z\eta}^{(1)}\right|^{1/2}\,
\varsigma_1(z;y,\eta)\,\dr\eta\,,\\
(2\pi)^{-n}\int_{T^*_xM} e^{i\varphi^{(2)}(z;x,\xi)}p_2(x,\xi)\,
\left|\det\varphi_{x\xi}^{(2)}\right|^{1/2}\,
\varsigma_2(z;x,\xi)\,\dr\xi
\end{split}
\end{equation}
of the form \eqref{fio-def1} with phase functions $\varphi_j\in\Ff_{\Phi_j}$ and  $p_j\in S^{m_j}_{\mathrm{phg}}$. Then the
Schwartz kernel of the composition $V_2^*V_1$ coincides with
\begin{multline}\label{composition1}
(2\pi)^{-2n}\iiint e^{i\psi(x,\xi;z;y,\eta)}b(x,\xi;z;y,\eta)\,\dr\eta\,\dr z\,\dr\xi\\
= (2\pi)^{-2n}\iiint|\eta|^{-n} e^{i\psi(x,|\eta|\xi;z;y,\eta)}b(x,|\eta|\xi;z;y,\eta)\,\dr\eta\,\dr z\,\dr\xi\,,
\end{multline}
where the integrals are taken over $T^*_xM\times M\times T^*_yM$,
\begin{equation}\label{psi-1}
\psi(x,\xi;z;y,\eta)\ =\ \varphi^{(1)}(z;y,\eta) - \varphi^{(2)}(z;x,\xi)
\end{equation}
and
\begin{multline}\label{b-1}
b(x,\xi;z;y,\eta)\\
=p_1(y,\eta)\,\overline{p_2(x,|\eta|\xi)}\left|\det\varphi_{x\eta}^{(1)}\right|^{1/2}
\left|\det\varphi_{x\eta}^{(2)}\right|^{1/2}\,
\varsigma_1(z;y,\eta)\,\varsigma_2(z;x,\xi)\,.
\end{multline}
Now we are going to apply the stationary phase method to the integral with respect to the variables $z$ and $\xi$, considering $|\eta|$ as a large parameter. A rigorous justification of the stationary phase formula for non-convergent integrals of this type can be found, for instance, in \cite[Appendix C]{SV}. 

In view of {\bf(a$_6$)}, the equations $\psi_\xi=0$ and $\psi_z=0$ are equivalent to
\begin{equation}\label{equations1}
z=z^{(2)}(x,\xi)
\quad\text{and}\quad 
\varphi^{(1)}_z(z;y,\eta)=\varphi^{(2)}_z(z;x,\xi)\,.
\end{equation}
If the cut-off functions $\varsigma_j$ have sufficiently small supports then, in view of {\bf(a$_6$)}, the equations \eqref{equations1} imply that the points $\left(z^{(1)}(y,\eta),\zeta^{(1)}(y,\eta)\right)$ and $\left(z^{(2)}(x,\xi),\zeta^{(2)}(x,\xi)\right)$ are close to each other. Thus we can assume without loss of generality that $\conesupp V_2$ lies in a small conic neighbourhood $\OC_2$ of the set $\Phi(\OC_1)$.

Let $\,\Phi_1(y_0,\eta_0)=(z_0,\zeta_0)=\Phi_2(x_0,\xi_0)\,$, and let $\,z=(z_1,\ldots,z_n)\,$ be local coordinates in a neighbourhood of $z_0$ such that the corresponding horizontal subspace $H_{(z_0,\zeta_0)}$ is transversal to the images $\dr\Phi_1(V_{(y_0,\eta_0)})$ and $\dr\Phi_2(V_{(x_0,\xi_0)})$. Then $\det\zeta^{(1)}_\eta(y_0,\eta_0)\ne0$ and
$\det\zeta^{(1)}_\eta(y_0,\eta_0)\ne0$ (see Example \ref{e:phi-1}) and, consequently, we have $\,\det\zeta^{(1)}_\eta(y,\eta)\ne0$ for all $(y,\eta)\in\OC_1\,$ and $\,\det\zeta^{(2)}_\xi(x,\xi)\ne0$ for all $(x,\xi)\in\OC_2\,$ provided that the neighbourhoods $\OC_j$ are small enough. It follows that the phase functions
\begin{equation}\label{phis-1}
\begin{split}
\varphi^{(1)}(z;y,\eta)\ &=\ (z-z^{(1)}(y,\eta))\cdot\zeta^{(1)}(y,\eta)\,,\\
\varphi^{(2)}(z;x,\xi)\ &=\ (z-z^{(2)}(x,\xi))\cdot\zeta^{(2)}(x,\xi)
\end{split}
\end{equation}
satisfy the conditions {\bf(a$_1$)}--{\bf(a$_3$)} for all $(y,\eta)\in\mathcal O_1\,$ and $(x,\xi)\in\OC_2\,$.

\subsubsection{Step 2}\label{s:proof-2}

Since the singular and classical principal symbols do not depend on the choice of the phase function, it is sufficient to prove the theorem assuming that $\varphi^{(j)}$ are given by \eqref{phis-1}. In this case  
\begin{equation}\label{phis-2}
\varphi^{(1)}_{z\eta}(z;y,\eta)=\zeta^{(1)}_\eta(y,\eta)\,,\quad
\varphi^{(2)}_{z\xi}(z;x,\xi)=\zeta^{(2)}_\xi(x,\xi)
\end{equation}
and the equations \eqref{equations1} turn into
\begin{equation}\label{equations2}
z=z^{(2)}(x,\xi)\quad\text{and}\quad\zeta^{(2)}(x,\xi)=\zeta^{(1)}(y,\eta)\,.
\end{equation}

Since $\det\zeta^{(2)}_\xi\ne0$, if the sets $\OC_1$ and $\supp\varsigma_j$ are small enough then the second equation \eqref{equations2} has a unique $\xi$-solution $\hat\xi(x;y,\eta)$ such that 
\begin{equation}\label{eqs-2}
\hat\xi(x^\star(y,\eta);y,\eta)=\xi^\star(y,\eta)\,.
\end{equation}
Thus the stationary point is $(z,\xi)=(z^{(2)}(x,\hat\xi),\hat\xi)$. It is unique and non-degenerate because, in view of \eqref{phis-1} and \eqref{phis-2},
\begin{equation}\label{hess}
\begin{pmatrix}
\psi_{zz}&\psi_{z\xi}\\ \psi_{\xi z}&\psi_{\xi\xi}
\end{pmatrix}
\ =\
\begin{pmatrix}
0&\zeta^{(2)}_\xi\\ \left(\zeta^{(2)}_\xi\right)^T&\psi_{\xi\xi}
\end{pmatrix}.
\end{equation}

Now, applying the stationary phase formula, we see that \eqref{composition1} coincides modulo a smooth function with
\begin{equation}\label{composition2}
(2\pi)^{-n}\int e^{i\varphi(x;y,\eta)}\left|\det\zeta^{(2)}_\xi(x,\hat\xi) \right|^{-1}\,\tilde p(x;y,\eta)\,\varsigma(x;y,\eta)\,\dr\eta\,,
\end{equation}
where
\begin{multline}\label{phi2}
\varphi(x;y,\eta)=
\psi(x,\hat\xi;z^{(2)}(x,\hat\xi);y,\eta)\\ 
=\ \left(z^{(2)}(x,\hat\xi)-z^{(1)}(y,\eta)\right)\cdot\zeta^{(1)}(y,\eta)\,,
\end{multline}
$\tilde p$ is an amplitude of class $S^{m_1+m_2}_{\mathrm{phg}}$ with
\begin{equation*}
\conesupp\tilde p\subset\{(x;y,\eta)\mid(y,\eta)\in\conesupp p_1,\,(x,\hat\xi)\in\conesupp p_2\},
\end{equation*}
and 
$$
\varsigma(x;y,\eta)\ =\ \varsigma_1(z^{(2)}(x,\hat\xi);y,\eta)\,\varsigma_2(z^{(2)}(x,\hat\xi);x,\hat\xi)\,.
$$
is a cut-off function satisfying {\bf(a$_4$)} and {\bf(a$_5$)}.
The leading homogeneous term of the amplitude $\,\tilde p\,$ is equal to
\begin{multline}\label{amplitude-2}
\tilde p_m(x;y,\eta)\\
=p_1(y,\eta)\,\overline{p_2(z^{(2)}(x,\hat\xi),\hat\xi)}\left|\det\zeta^{(1)}_\eta(y,\eta)\right|^{1/2}
\left|\det\zeta^{(2)}_\xi(x,\hat\xi)\right|^{1/2},
\end{multline}
where $p_{j,m_j}$ are the leading homogeneous terms of the amplitudes $p_j$
(here we have used the fact that the signature of the Hessian \eqref{hess} is equal to zero).

\subsubsection{Step 3}\label{s:proof-3}

In view of \eqref{eqs-1} and \eqref{eqs-2},
\begin{equation}\label{phi2-zero}
\varphi(x^\star;y,\eta)\ =\ 0\,. 
\end{equation}
Since
\begin{equation}\label{eqs-3}
\psi_z(x,\hat\xi,z^{(2)}(x,\hat\xi),y,\eta)
=\psi_\xi(x,\hat\xi,z^{(2)}(x,\hat\xi),y,\eta)=0
\end{equation}
for all $x,y,\eta$, we also have
$$
\varphi_x(x,y,\eta)=\psi_x(x,\hat\xi,z^{(2)}(x,\hat\xi),y,\eta)
=(z^{(2)}_x(x,\hat\xi))^T\,\zeta^{(2)}(x,\hat\xi)\,.
$$
This equality and \eqref{preserve-1} imply that
$\varphi_x(x,y,\eta)=\hat\xi(x,y,\eta)$. Now, by \eqref{eqs-2},
\begin{equation}\label{phi2-x}
\varphi_x(x^\star;y,\eta)\ =\ \xi^\star\,.
\end{equation}
Similarly, from \eqref{eqs-3} and \eqref{preserve-1} it follows that
\begin{multline}\label{phi2-eta}
\varphi_\eta(x,y,\eta)=\psi_\eta(x,\hat\xi,z^{(2)}(x,\hat\xi),y,\eta)\\
=\left.\nabla_\eta\left((z-z^{(1)}(y,\eta))\cdot
\zeta^{(1)}(y,\eta)\right)\right|_{z=z^{(2)}(x,\hat\xi)}\\
=\ (z^{(2)}(x,\hat\xi)-z^{(1)}(y,\eta))
\cdot\zeta^{(1)}_\eta(y,\eta)\,.
\end{multline}

Differentiating the identity $\zeta^{(2)}(x,\hat\xi)\equiv\zeta^{(1)}(y,\eta)$, we obtain
\begin{equation}\label{eqs-4}
\begin{split}
\hat\xi_x\ & =\ -\left(\zeta^{(2)}_\xi(x,\hat\xi)\right)^{-1}\zeta^{(2)}_x(x,\hat\xi)\,,\\
\hat\xi_\eta\ & =\ \left(\zeta^{(2)}_\xi(x,\hat\xi)\right)^{-1} \zeta^{(1)}_\eta(y,\eta)\,.
\end{split}
\end{equation}
The first equality \eqref{eqs-4} and \eqref{phi2-eta} imply that
\begin{multline*}
\varphi_{x\eta}(x;y,\eta)\ =\ \left(\nabla_x\,z^{(2)}(x,\hat\xi)\right)^T
\zeta^{(1)}_\eta(y,\eta)\\
=\ \left(z^{(2)}_x(x,\hat\xi)-z^{(2)}_\xi(x,\hat\xi)\,\left(\zeta^{(2)}_\xi(x,\hat\xi)\right)^{-1}
\zeta^{(2)}_x(x,\hat\xi)\right)^T\zeta^{(1)}_\eta(y,\eta)\,.
\end{multline*}
In view of \eqref{preserve-2a} and \eqref{preserve-2b},
\begin{multline*}
z^{(2)}_x-z^{(2)}_\xi\,\left(\zeta^{(2)}_\xi\right)^{-1}
\zeta^{(2)}_x\\
=\left((\zeta^{(2)}_\xi)^T\right)^{-1}
\left((\zeta^{(2)}_\xi)^Tz^{(2)}_x
-(z^{(2)}_\xi)^T\zeta^{(2)}_x\right)=\left((\zeta^{(2)}_\xi)^T\right)^{-1}
\end{multline*}
Therefore 
\begin{equation}\label{phi2-x-eta}
\varphi_{x\eta}(x;y,\eta)\ =\
\left(\zeta^{(2)}_\xi(x,\hat\xi)\right)^{-1}\,
\zeta^{(1)}_\eta(y,\eta)\,.
\end{equation}

The second equality \eqref{eqs-4}, \eqref{phi2-eta} and \eqref{eqs-2} imply that
\begin{multline*}
\varphi_{\eta\eta}(x^\star;y,\eta)\ =\ \left.\left(\nabla_\eta z^{(2)}(x,\hat\xi)-z^{(1)}_\eta(y,\eta)\right)^T
\zeta^{(1)}_\eta(y,\eta)\right|_{x=x^\star}\\
=\ \left( z^{(2)}_\xi(x^\star,\xi^\star)\left(\zeta^{(2)}_\xi(x^\star,\xi^\star)\right)^{-1} \zeta^{(1)}_\eta(y,\eta)-z^{(1)}_\eta(y,\eta)\right)^T\zeta^{(1)}_\eta(y,\eta)\,.
\end{multline*}
Since the matrix $\varphi_{\eta\eta}$ is symmetric, the above can be rewritten as
\begin{multline}\label{phi2-eta-eta}
\varphi_{\eta\eta}(x^\star;y,\eta)\\ =\ (\zeta^{(1)}_\eta)^T\left(z^{(2)}_\xi(x^\star,\xi^\star)\left(\zeta^{(2)}_\xi(x^\star,\xi^\star)\right)^{-1}
-z^{(1)}_\eta\left(\zeta^{(1)}_\eta\right)^{-1}\right)\zeta^{(1)}_\eta\,,
\end{multline}
where $x^\star$, $\xi^\star$, $z^{(1)}_\eta$ and $\zeta^{(1)}_\eta$ are evaluated at $(y,\eta)$.

\subsubsection{Step 4}\label{s:proof-4}

Since $\det\zeta^{(j)}_\eta\ne0$, the equalities \eqref{phi2-zero}, \eqref{phi2-x} and \eqref{phi2-x-eta} imply that $\varphi\in\Ff_\Phi$ and $\varsigma$ satisfies {\bf(a$_6$)} provided that $\OC_1$ is small enough. Thus \eqref{composition2} defines the Schwartz kernel of a FIO associated with the canonical transformation $\Phi$. Applying the procedure described in Remark \ref{r:fio-def2}, we can remove the dependence of $\tilde p$ on $x$ and rewrite \eqref{composition2} in the form
\begin{equation}\label{composition4}
(2\pi)^{-n}\int e^{i\varphi(x;y,\eta)}\,p(y,\eta)\,
\left|\det\varphi_{x\eta}(x;y,\eta)\right|^{1/2}\,\varsigma(x;y,\eta)\,\dr\eta
\end{equation}
where
$p(y,\eta)$ is an amplitude of class $S^{m_1+m_2}_{\mathrm{phg}}$. From \eqref{eqs-2} it follows that
$$
\conesupp p\subset\{(y,\eta)\in\conesupp p_1 \mid \Phi(y,\eta)\in\conesupp p_2\}\,.
$$
This completes the proof of the first statement of the theorem.

\subsubsection{Step 5}\label{s:proof-5}

In order to prove the second statement, let us note that, in view of \eqref{eqs-2}, \eqref{amplitude-2} and \eqref{phi2-x-eta}, the leading homogeneous term $p_m$ of the amplitude $p$ in \eqref{composition4} is equal to 
\begin{equation}\label{amplitude-3}
p_m(y,\eta)\  =\  p_{1,m_1}(y,\eta)\,\overline{p_{2,m_2}\left(\Phi(y,\eta)\right)},
\end{equation}
where $p_{j,m_j}$ are the leading homogeneous terms of the amplitudes $p_j$.
Since the phase functions $\varphi^{(j)}$ and $\varphi$ are real, \eqref{Theta-real-1} implies that
\begin{align*}
p_{1,m_1}(y,\eta)\ &=\  i^{\,-\kappa_+\left(\varphi^{(1)}_{\eta\eta}(y,\eta)\right)}\,s_{V_1}(y,\eta)\,,\\
p_{2,m_2}\left(\Phi(y,\eta)\right)\ &=\ i^{\,-\kappa_+\left(\varphi^{(2)}_{\xi\xi}\left(\Phi(y,\eta)\right)\right)}\,s_{V_2}\left(\Phi(y,\eta)\right)\,,\\
s_{V_2^*V_1}(y,\eta)\ &=\ i^{\,\kappa_+\left(\varphi_{\eta\eta}(x^\star;y,\eta)\right)}p_m(y,\eta)\,.
\end{align*}
In view of \eqref{phi0-example},  we have 
$$
\kappa_+\left(\varphi^{(1)}_{\eta\eta}(y,\eta)\right)=\kappa_-(A_1)\quad\text{and}\quad
\kappa_+\left(\varphi^{(2)}_{\xi\xi}\left(\Phi(y,\eta)\right)\right)=\kappa_-(A_2)\,,
$$
where
$$
A_1=z^{(1)}_\eta(y,\eta)\left((\zeta^{(1)}_\eta(y,\eta)\right)^{-1},\quad
A_2=z^{(2)}_\xi\left(\Phi(y,\eta)\right)\left(\zeta^{(2)}_\xi\left(\Phi(y,\eta)\right)\right)^{-1}.
$$
Also, by \eqref{phi2-eta-eta}, $\,\kappa_+\left(\varphi_{\eta\eta}(x^\star;y,\eta)\right)=\kappa_-(A_1-A_2)\,$.
Therefore from \eqref{amplitude-3} it follows that 
$$
s_{V_2^*V_1}(y,\eta)\ =\ i^{\,h(y,\eta)}\,s_{V_1}(y,\eta)\,\overline{s_{V_2}\left(\Phi(y,\eta)\right)}\,,
$$
with $\,h(y,\eta)=\kappa_-(A_2)-\kappa_-(A_1)+\kappa_-(A_1-A_2)$.
It remains to notice that, in view of \eqref{kashiwara3} and \eqref{image2}, $\,h(y,\eta)=\varkappa\left(\dr\Phi_1(V_{(y,\eta)}),\dr\Phi_2(V_{\Phi(y,\eta)})\right)$.

\subsubsection{Step 6}\label{s:proof-6} 

Finally, for each fixed $\vtheta_j\in\DC(\Phi_j)$ and $\vtheta\in\DC(\Phi)$, the principal symbols $\sigma_{V_j,\vtheta_j}$ and $\sigma_{V_2^*V,\vtheta}$ are smooth single-valued functions on simply connected neighbourhoods of these points. From \eqref{smooth-symbol1}, \eqref{smooth-symbol3} and part (2) it follows that
\begin{itemize}
\item
$\sigma_{V_2^*V_1,\vtheta}(y,\eta)=i^{\,m}\,\sigma_{V_1,\vtheta_1}(y,\eta)\,\overline{\sigma_{V_2,\vtheta_2}\left(\Phi(y,\eta)\right)}$ with an integer $m$ independent of $(y,\eta)$;
\item
$\sigma_{V_2^*V_1,\vtheta}(\vtheta)=i^{\,k(\vtheta)}\,\sigma_{V_1,\vtheta}(\vtheta)\,\overline{\sigma_{V_2,\Phi(\vtheta)}\left(\Phi(\vtheta)\right)}.$
\end{itemize}
These two statements imply the required formula for the classical principal symbols.
\qed

\end{document}